\newtheorem{theorem}{Theorem}[section]
\newtheorem{proposition}[theorem]{Proposition}
\newtheorem{lemma}[theorem]{Lemma}
\theoremstyle{remark}
\newtheorem{remark}{Remark}[section]
\numberwithin{equation}{section}
\begin{document}

\title[Exact Cubature Rules for Symmetric  Functions]
{Exact Cubature Rules for Symmetric Functions}

\author{J.F.  van Diejen}

\address{
Instituto de Matem\'atica y F\'{\i}sica, Universidad de Talca,
Casilla 747, Talca, Chile}

\email{diejen@inst-mat.utalca.cl}

\author{E. Emsiz}

\address{
Facultad de Matem\'aticas, Pontificia Universidad Cat\'olica de Chile,
Casilla 306, Correo 22, Santiago, Chile}
\email{eemsiz@mat.uc.cl}

\subjclass[2010]{Primary: 65D32;  Secondary 05E05, 15B52, 33C52, 33D52, 65T40}
\keywords{cubature rules, symmetric functions, generalized Schur polynomials, Bernstein-Szeg\"o polynomials, unitary random matrix ensembles,  Jacobi distributions}

\thanks{This work was supported in part by the {\em Fondo Nacional de Desarrollo
Cient\'{\i}fico y Tecnol\'ogico (FONDECYT)} Grants   \# 1170179 and \# 1181046.}

\date{April 2018}

\begin{abstract}
We employ a multivariate extension of the Gauss quadrature formula, originally due to Berens, Schmid and Xu \cite{ber-sch-xu:multivariate},
so as to derive cubature rules for the integration of symmetric functions over hypercubes (or infinite limiting degenerations thereof)  with respect to
the densities of unitary random matrix ensembles. Our main application concerns the explicit implementation of a class of cubature rules associated with the
Bernstein-Szeg\"o polynomials, which permit the exact integration of  symmetric rational functions
with prescribed poles at coordinate hyperplanes against unitary circular Jacobi distributions
stemming from the Haar measures on the symplectic and the orthogonal groups. 
\end{abstract}

\maketitle

%\tableofcontents

%%%%%%%%%%%%%%%%%%%%%%%%%%%%%%%%%%%%%%%%%%%%
%%%%%%%%%%%%%%% SECTION %%%%%%%%%%%%%%%%%%%%%%%
%%%%%%%%%%%%%%% SECTION %%%%%%%%%%%%%%%%%%%%%%%
%%%%%%%%%%%%%%% SECTION %%%%%%%%%%%%%%%%%%%%%%%
%%%%%%%%%%%%%%%%%%%%%%%%%%%%%%%%%%%%%%%%%%%%

\section{Introduction}\label{sec1}
The study of cubature rules for the numeric integration of functions in several variables has a long fruitful history, see e.g. \cite{str:approximate,dav-rab:methods,sob:cubature,sob-vas:theory,coo:constructing,coo-mys-sch:cubature,ise-nor:quadrature,dun-xu:orthogonal,col-hub:moment} and references therein.
Over the past few years significant progress has been reported regarding the construction of explicit cubature
rules of Gauss-Chebyshev type, permitting the exact integration of multivariate polynomials \cite{li-xu:discrete,moo-pat:cubature,noz-saw:note,moo-mot-pat:gaussian,hri-mot:discrete,hri-mot-pat:cubature}.

Inspired by these recent developments we invoke the Cauchy-Binet-Andr\'eief formulas to rederive a multivariate lifting of the Gauss quadrature formula due to Berens, Schmid and Xu \cite{ber-sch-xu:multivariate}, in the version
designed to integrate symmetric functions over  a hypercube, a hyperoctant, or over the entire euclidean space. In the case of the classical Gauss-Hermite, the Gauss-Laguerre, and the Gauss-Jacobi quadratures \cite{sze:orthogonal,dav-rab:methods}, this readily produces corresponding cubature rules for the exact integration of symmetric polynomials against the densities of ubiquitous  unitary random matrix ensembles associated with the Hermite, Laguerre and Jacobi polynomials  \cite{meh:random,for:log-gases}, respectively. 

At the special parameter values for which the Gauss-Jacobi quadrature simplifies to a Gauss-Chebyshev quadrature, the  construction in question leads to
cubature rules associated with the classical simple Lie groups that turn out to be closely related to those studied in 
 \cite{moo-pat:cubature,moo-mot-pat:gaussian,hri-mot:discrete,hri-mot-pat:cubature}.  
One of our  primary concerns is to extend the corresponding Gauss-Chebyshev cubatures to
a class of explicit cubature rules arising from the Bernstein-Szeg\"o polynomials \cite[Section 2.6]{sze:orthogonal}. It is well-known that the Gauss quadrature associated with
the Bernstein-Szeg\"o polynomials 
\cite{not:error,peh:remainder,not:interpolatory,ber-cac-mar:new,ber-cac-gar-mar:new} permits the exact integration of rational functions with prescribed poles (outside the integration domain)
\cite{dar-gon-jim:quadrature,bul-cru-dec-gon:rational} (cf. also \cite{gau:gauss-type,van-van:quadrature,gau:use,bul-gon-hen-nja:quadrature} for related approaches). Our aim is to extend this picture to the multivariate setup: we construct an
exact cubature rule for a class of symmetric rational functions
with prescribed poles at the coordinate hyperplanes, where the integration is against the unitary circular Jacobi distributions
stemming from the Haar measures on the symplectic and the orthogonal groups (cf. e.g. \cite[Chapter IX.9]{sim:representations}, \cite[Chapter 11.10]{pro:lie} or \cite[Chapter 2.6]{for:log-gases}).
This way the above-mentioned Gauss-Chebyshev cubature rules originating from parameter specializations of the (more involved) Gauss-Jacobi cubature formulas are generalized so as to allow
for pole singularities on the coordinate hyperplanes.

The presentation is structured as follows. After setting up the notation for the Gauss quadrature rule in Section \ref{sec2}, we emphasize  in Section \ref{sec3} the effectiveness of the Cauchy-Binet-Andr\'eief formulas  when extending the underlying family of orthogonal polynomials to the multivariate level via associated generalized Schur polynomials 
\cite{mac:schur,ber-sch-xu:multivariate,nak-nou-shi-yam:tableau,ser-ves:jacobi}.
 This readily allows to recover a Gaussian cubature rule 
 for the integrations of symmetric functions from \cite[Equation (8)]{ber-sch-xu:multivariate} (with $\rho=0$) in Section \ref{sec4}.
 In Section \ref{sec5}
we highlight the explicit cubature rules stemming  the classical Hermite, Laguerre and Jacobi families, which permit 
the exact  integration of symmetric polynomials with respect to the densities of the corresponding unitary ensembles.
In the remainder of the paper the implementation of the construction for the case of Bernstein-Szeg\"o polynomials is carried out. Specifically, after recalling the definition of the
Bernstein-Szeg\"o polynomials in Section \ref{sec6} and providing estimates for the locations of their roots in Section \ref{sec7}, the corresponding Gauss quadrature rule
stemming from Refs. \cite{dar-gon-jim:quadrature,bul-cru-dec-gon:rational} is exhibited in Section \ref{sec8}.  In Section \ref{sec9} we then apply the general formalism of Section \ref{sec4}
to lift this quadrature to an explicit cubature rule for an associated class of symmetric rational functions. To enhance the readability, some technical details regarding the explicit computation of  the pertinent Christoffel weights associated with  the Bernstein-Szeg\"o families are supplemented in Appendix \ref{appA} at the end.

\section{Preliminaries and notation regarding the Gauss quadrature}\label{sec2}
Given 
a continuous weight function $\text{w}(x)>0$  on a nonempty interval $(a,b)$ with finite moments, let
$p_l(x)$, $l=0,1,2,\ldots$ denote the orthonormal basis obtained from the monomial basis $m_l(x):=x^l$, $l=0,1,2,\ldots$ 
via Gram-Schmidt orthogonalization with respect to the inner product
\begin{equation}
(f,g)_{\text{w}}:= \int_a^b f(x) g(x) \text{w}(x)\text{d} x
\end{equation}
(for $f,g:(a,b)\to\mathbb{R}$ polynomial (say)). 
It is well-known (cf. e.g. \cite[Section 3.3]{sze:orthogonal}) that  the roots of such orthogonal polynomials $p_l(x)$ are simple and belong to $(a,b)$, i.e.
for  $m\geq 0$:
\begin{subequations}
\begin{equation}\label{factorization}
p_{m+1}(x)= \alpha_{m+1} \bigl(x-x_0^{(m+1)}\bigr) \bigl(x-x_1^{(m+1)}\bigr)\cdots  \bigl(x-x_m^{(m+1)}\bigr),
\end{equation}
with
\begin{equation}\label{s-roots}
a< x_0^{(m+1)}< x_1^{(m+1)}<\cdots <  x_{m}^{(m+1)}< b
\end{equation}
\end{subequations}
and $\alpha_l:=1/(p_l,m_l)_{\text{w}}$ ($l=0,1,2,\ldots$).

Let $f(x)$ be an arbitrary polynomial  of degree at most $2m+1$ in $x$. 
The celebrated Gauss quadrature formula states that in this situation (cf. e.g. \cite[Section 3.4]{sze:orthogonal}, \cite{gau:survey}, or
for an overview of more recent developments \cite{gau:orthogonal}):
\begin{subequations}
\begin{equation}\label{gqr:a}
 \int_a^b f(x)\text{w}(x) \text{d}x =  \sum_{0\leq \hat{l}\leq m}    f(x_{\hat{l}}^{(m+1)})   {\text{w}}^{(m+1)}_{\hat{l}},
\end{equation}
where the corresponding Christoffel weights $ {\text{w}}^{(m+1)}_0,\ldots , {\text{w}}^{(m+1)}_m$ are given by
\begin{equation}\label{gqr:b}
{\text{w}}^{(m+1)}_{\hat{l}} =  \Big( \sum_{0\leq l\leq m} \bigl( p_l(x_{\hat{l}}^{(m+1)})\bigr)^2\Bigr)^{-1}\quad (\hat{l}=0,\ldots ,m).
\end{equation}
\end{subequations}
This quadrature rule can be reformulated in terms of discrete orthogonality relations for $p_0(x),p_1(x),\ldots ,p_m(x)$.
Indeed, when applying the Gauss quadrature rule \eqref{gqr:a}, \eqref{gqr:b} to the product $f(x)=p_l(x)p_{k}(x)$ with $l,k$ at most $m$, the defining orthogonality
\begin{equation}
(p_l,p_k)_{\text{w}}= \begin{cases}
1&\text{if}\ k =l, \\
0&\text{if}\ k \neq l
\end{cases}
\end{equation}
 gives rise to the associated finite-dimensional discrete orthogonality:
\begin{subequations}
\begin{equation}\label{finite-orthogonality}
  \sum_{0\leq \hat{l}\leq m}    p_l(x_{\hat{l}}^{(m+1)}) p_{k} (x_{\hat{l}}^{(m+1)})   {\text{w}}^{(m+1)}_{\hat{l}}=
\begin{cases}
1&\text{if}\ k =l, \\
0&\text{if}\ k \neq l
\end{cases}
\end{equation}
($l,k \in \{ 0,\ldots ,m\}$). 
 By `column-row duality',  one can further reformulate Eq. \eqref{finite-orthogonality} in terms of the equivalent dual orthogonality relations
\begin{equation}\label{finite-orthogonality:dual}
  \sum_{0\leq l\leq m}    p_l(x_{\hat{l}}^{(m+1)}) p_{l} (x_{\hat{k}}^{(m+1)})  =
\begin{cases}
 1/ {\text{w}}^{(m+1)}_{\hat{l}} &\text{if}\ \hat{k} =\hat{l}, \\
0&\text{if}\ \hat{k} \neq \hat{l}
\end{cases}
\end{equation}
\end{subequations}
($\hat{l},\hat{k} \in \{ 0,\ldots ,m\}$).

\begin{remark}\label{m+1:rem}
The discrete orthogonality relations in Eq. \eqref{finite-orthogonality} remain in fact valid when either $l$ or $k$ (but not both) become equal to $m+1$ (because $p_{m+1}(x)$ vanishes identically on the nodes $x_0^{(m+1)},\ldots ,x_m^{(m+1)}$).
\end{remark}

\section{Generalized Schur polynomials}\label{sec3}
Given  $ \lambda =(\lambda_1,\ldots,\lambda_n)$ in the fundamental cone
\begin{equation}\label{f-cone}
\Lambda^{(n)}:=\{ \lambda  \in\mathbb{Z}^n \mid \lambda_1\geq \lambda_2\geq\cdots \geq \lambda_n\geq 0 \} ,
\end{equation}
the generalized Schur polynomial $P_\lambda(\mathbf{x})$ associated with the orthonormal system $p_0(x),p_1(x),p_2(x),\ldots $
is defined via the determinantal formula (cf. \cite{mac:schur,ber-sch-xu:multivariate,nak-nou-shi-yam:tableau,ser-ves:jacobi})
\begin{subequations}
\begin{equation}\label{schur}
P_\lambda(\mathbf{x}) := \frac{1}{\text{V}(\mathbf{x})} \det [ p_{\lambda_j+n-j}(x_k)]_{1\leq j,k\leq n} ,
\end{equation}
where $V(\mathbf{x})$ refers to the Vandermonde determinant
\begin{equation}\label{vandermonde}
\text{V}(\mathbf{x}):=\prod_{1\leq j <k \leq n}  (x_j-x_k) .
\end{equation}
\end{subequations}
Clearly $P_\lambda (\mathbf{x})$ constitutes a (permutation-)symmetric polynomial in the components of $\mathbf{x}:=(x_1,x_2,\ldots ,x_n)$ (as the determinant in the numerator produces an antisymmetric polynomial which is therefore divisible by the Vandermonde determinant).

It is well-known that the symmetric polynomials in question inherit multivariate orthogonality relations from the underlying univariate family
(cf. e.g. \cite{ber-sch-xu:multivariate,ser-ves:jacobi}). 

\begin{proposition}[Orthogonality Relations]\label{GS-orthogonality:prp}
The generalized Schur polynomials  $P_\lambda(\mathbf{x})$, $\lambda\in \Lambda^{(n)}$ satisfy the  orthogonality relations
\begin{subequations}
\begin{align}
\frac{1}{n!} \int_a^{b}\cdots\int_a^b 
P_\lambda (\mathbf{x})  P_\mu (\mathbf{x})  \emph{W}(\mathbf{x}) \text{d} x_1\cdots\text{d} x_n 
 =\begin{cases}   1 &\text{if}\ \mu = \lambda\\  0  &\text{if}\  \mu\neq \lambda\end{cases}  
\end{align}
($\lambda,\mu \in \Lambda^{(n)}$), where
\begin{equation}\label{W}
\emph{W}(\mathbf{x}): =\bigl(  \emph{V}(\mathbf{x}) \bigr)^2
 \prod_{1\leq j\leq n}\emph{w}(x_j) .
\end{equation}
\end{subequations}
\end{proposition}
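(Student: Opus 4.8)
The plan is to collapse the $n$-fold integral to a single $n\times n$ determinant of univariate moments by means of the Andr\'eief (continuous Cauchy--Binet) identity, and then to recognize the resulting $0$--$1$ matrix as either an identity matrix or a matrix possessing a null row.

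To this end I would first substitute the determinantal formula \eqref{schur} for both $P_\lambda(\mathbf{x})$ and $P_\mu(\mathbf{x})$ into the integrand. Since $\text{W}(\mathbf{x})=\bigl(\text{V}(\mathbf{x})\bigr)^2\prod_{1\le j\le n}\text{w}(x_j)$, the two Vandermonde denominators cancel against $\bigl(\text{V}(\mathbf{x})\bigr)^2$, so that the left-hand side of the asserted identity becomes
\[
\frac{1}{n!}\int_a^b\!\!\cdots\!\!\int_a^b \det\bigl[p_{\lambda_j+n-j}(x_k)\bigr]_{1\le j,k\le n}\,\det\bigl[p_{\mu_j+n-j}(x_k)\bigr]_{1\le j,k\le n}\,\prod_{1\le j\le n}\text{w}(x_j)\,\text{d}x_j .
\]
At this stage I would invoke the Andr\'eief identity
\[
\frac{1}{n!}\int_a^b\!\!\cdots\!\!\int_a^b \det\bigl[f_j(x_k)\bigr]_{1\le j,k\le n}\,\det\bigl[g_j(x_k)\bigr]_{1\le j,k\le n}\,\prod_{1\le j\le n}\text{w}(x_j)\,\text{d}x_j=\det\bigl[(f_j,g_k)_{\text{w}}\bigr]_{1\le j,k\le n},
\]
applied with $f_j=p_{\lambda_j+n-j}$ and $g_k=p_{\mu_k+n-k}$; the univariate orthonormality $(p_l,p_k)_{\text{w}}=\delta_{l,k}$ then turns the left-hand side into $\det\bigl[\delta_{\lambda_j+n-j,\,\mu_k+n-k}\bigr]_{1\le j,k\le n}$.

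It remains to evaluate this last determinant, and here the combinatorics of the cone $\Lambda^{(n)}$ enters: for $\lambda\in\Lambda^{(n)}$ the shifted parts satisfy $\lambda_1+n-1>\lambda_2+n-2>\cdots>\lambda_n\ge 0$, so $\lambda\mapsto\{\lambda_j+n-j\mid 1\le j\le n\}$ is a bijection from $\Lambda^{(n)}$ onto the collection of $n$-element subsets of $\mathbb{Z}_{\ge 0}$. When $\mu=\lambda$, strict monotonicity forces $\lambda_j+n-j=\lambda_k+n-k$ exactly for $j=k$, so the matrix is the $n\times n$ identity and its determinant equals $1$. When $\mu\neq\lambda$, the associated $n$-element sets differ, so one can pick an index $j$ with $\lambda_j+n-j\notin\{\mu_k+n-k\mid 1\le k\le n\}$; then the entire $j$-th row of $\bigl[\delta_{\lambda_j+n-j,\,\mu_k+n-k}\bigr]$ vanishes identically and the determinant equals $0$. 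This produces precisely the two cases in the statement.

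The argument carries no real difficulty beyond the correct formulation and application of the Andr\'eief identity; should a self-contained derivation of that identity be desired, one expands one of the two determinants over the symmetric group, relabels the integration variables to absorb the resulting permutation, and collects the $n!$ identical copies of the Leibniz expansion of $\det\bigl[(f_j,g_k)_{\text{w}}\bigr]$ --- a routine manipulation I would not belabour here.
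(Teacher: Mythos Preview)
Your proof is correct and follows essentially the same route as the paper: substitute the determinantal definitions, cancel the Vandermonde factors against $(\text{V}(\mathbf{x}))^2$, apply the Andr\'eief (Cauchy--Binet) identity to reduce to $\det\bigl[(p_{\lambda_j+n-j},p_{\mu_k+n-k})_{\text{w}}\bigr]$, and invoke univariate orthonormality. Your explicit analysis of the resulting Kronecker-delta determinant via the bijection $\lambda\mapsto\{\lambda_j+n-j\}$ is a nice clarification of a step the paper leaves implicit.
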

\begin{proof}
This orthogonality is immediate from $(i)$ a classical (Cauchy-Binet type) integration formula for the products of determinants going back to M.C.  Andr\'eief
\cite{and:note} (which is for instance reproduced with proof in \cite[Lemma 3.1]{bai-dei-stra:products}), in combination with $(ii)$  the orthogonality of the basis polynomials  $p_0(x),p_1(x),p_2(x),\ldots$. More specifically, one has that $\forall \lambda,\mu\in\Lambda^{(n)}$:
\begin{align*}
&\frac{1}{n!} \int_a^{b}\cdots\int_a^b 
P_\lambda (\mathbf{x})  P_\mu (\mathbf{x})  \text{W}(\mathbf{x}) \text{d} x_1\cdots\text{d} x_n \\
&= \frac{1}{n!} \int_a^{b}\cdots\int_a^b 
 \det [ p_{\lambda_j+n-j}(x_k)]_{1\leq j,k\leq n}   \det [ p_{\mu_j+n-j}(x_k)]_{1\leq j,k\leq n}  \prod_{1\leq j\leq n}\text{w}(x_j) \text{d} x_j\\
 &\stackrel{(i)}{=} \det \left[  \int_a^b p_{\lambda_j+n-j}(x)  p_{\mu_k+n-k}(x)  \text{w}(x)\text{d} x   \right]_{1\leq j,k\leq n}
 \stackrel{(ii)}{=} \begin{cases}   1  &\text{if}\ \mu = \lambda , \\  0  &\text{if}\  \mu\neq \lambda .\end{cases}  
\end{align*}
\end{proof}

The following proposition provides a corresponding multivariate generalization of the finite-dimensional discrete orthogonality in Eq. \eqref{finite-orthogonality}, which holds for $P_\lambda (\mathbf{x})$ when $\lambda$ is restricted to the fundamental alcove
\begin{equation}\label{f-alcove}
\Lambda^{(m,n)}:=\{ \lambda =(\lambda_1,\ldots,\lambda_n) \in\mathbb{Z}^n \mid m \geq \lambda_1\geq \lambda_2\geq\cdots \geq \lambda_n\geq 0 \} .
\end{equation}

\begin{proposition}[Discrete Orthogonality Relations]\label{GS-discrete-orthogonality:prp}
The generalized Schur polynomials  $P_\lambda(\mathbf{x})$, $\lambda\in \Lambda^{(m,n)}$
satisfy the  discrete orthogonality relations
\begin{subequations}
\begin{align}
\sum_{\hat{\lambda}\in \Lambda^{(m,n)}}
P_\lambda \bigl(\mathbf{x}^{(m,n)}_{\hat{\lambda}}\bigr)  P_\mu \bigl(\mathbf{x}^{(m,n)}_{\hat{\lambda}}\bigr)  \emph{W}_{\hat{\lambda}}
 =\begin{cases}   1 &\text{if}\ \mu = \lambda\\  0  &\text{if}\  \mu\neq \lambda\end{cases}  
\end{align}
($\lambda,\mu \in \Lambda^{(m,n)}$), where 
\begin{equation}\label{X-nodes}
\mathbf{x}^{(m,n)}_{\hat{\lambda}}:= \left(x^{(m+n)}_{\hat{\lambda}_1+n-1},x^{(m+n)}_{\hat{\lambda}_2+n-2},\ldots ,x^{(m+n)}_{\hat{\lambda}_{n-1}+1},x^{(m+n)}_{\hat{\lambda}_n}\right)
\end{equation}
and
\begin{equation}\label{W-discrete}
\emph{W}_{\hat{\lambda}}:= \Bigl( \emph{V}\bigl( \mathbf{x}^{(m,n)}_{\hat{\lambda}} \bigr) \Bigr)^2  \prod_{1\leq j\leq n}  \emph{w}^{(m+n)}_{\hat{\lambda}_j+n-j}
\end{equation}
\end{subequations}
(for $\hat{\lambda}\in \Lambda^{(m,n)}$). Here $x^{(m+n)}_{\hat{\lambda}_j+n-j}$ and $\emph{w}^{(m+n)}_{\hat{\lambda}_j+n-j}$ ($j=1,\ldots ,n$) are in accordance with the definitions in Eqs. \eqref{factorization}, \eqref{s-roots} and Eqs. \eqref{gqr:a}, \eqref{gqr:b}, respectively.
\end{proposition}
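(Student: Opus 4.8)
The plan is to deduce the discrete orthogonality from the continuous orthogonality of Proposition~\ref{GS-orthogonality:prp} by means of an $n$-fold application of the one-variable Gauss quadrature. The key point is that for $\lambda,\mu\in\Lambda^{(m,n)}$ the product $P_\lambda(\mathbf{x})P_\mu(\mathbf{x})\bigl(\text{V}(\mathbf{x})\bigr)^2$ is, with all but one variable frozen, a polynomial of degree at most $\lambda_1+\mu_1+2(n-1)\le 2(m+n)-1$ in the remaining variable; hence the Gauss quadrature \eqref{gqr:a}--\eqref{gqr:b}, taken with $m+1$ replaced by $m+n$ (so that the nodes are the roots $x^{(m+n)}_0,\ldots ,x^{(m+n)}_{m+n-1}$ of $p_{m+n}$ and the weights the associated $\text{w}^{(m+n)}_0,\ldots ,\text{w}^{(m+n)}_{m+n-1}$), integrates the corresponding one-variable integral against $\text{w}$ exactly.

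Carrying out this quadrature successively in $x_1,x_2,\ldots ,x_n$ (which is legitimate, since substituting a node for one variable does not raise the degree in the others) evaluates the continuous integral $\tfrac{1}{n!}\int_a^b\cdots\int_a^b P_\lambda(\mathbf{x})P_\mu(\mathbf{x})\text{W}(\mathbf{x})\,\text{d}x_1\cdots\text{d}x_n$ as the finite sum
\begin{equation*}
\frac{1}{n!}\sum_{\hat l\in\{0,\ldots ,m+n-1\}^n} P_\lambda(\mathbf{y}_{\hat l})\,P_\mu(\mathbf{y}_{\hat l})\prod_{1\le j\le n}\text{w}^{(m+n)}_{\hat l_j},\qquad \mathbf{y}_{\hat l}:=\bigl(x^{(m+n)}_{\hat l_1},\ldots ,x^{(m+n)}_{\hat l_n}\bigr).
\end{equation*}
Every summand with a repeated index $\hat l_i=\hat l_j$ vanishes on account of the nodal Vandermonde factor $\bigl(\text{V}(\mathbf{y}_{\hat l})\bigr)^2$ hidden inside $P_\lambda P_\mu\text{W}$, while the summand is invariant under permutations of $(\hat l_1,\ldots ,\hat l_n)$ because $P_\lambda$, $P_\mu$ and the squared Vandermonde are symmetric and the weight is a product. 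Restricting accordingly to strictly decreasing tuples $m+n-1\ge \hat l_1>\cdots >\hat l_n\ge 0$ and writing $\hat l_j=\hat\lambda_j+n-j$ gives a bijection onto $\hat\lambda\in\Lambda^{(m,n)}$; this produces a compensating factor $n!$, and, recognizing $\mathbf{x}^{(m,n)}_{\hat\lambda}$ and $\text{W}_{\hat\lambda}$ from \eqref{X-nodes}--\eqref{W-discrete}, converts the displayed sum into $\sum_{\hat\lambda\in\Lambda^{(m,n)}}P_\lambda(\mathbf{x}^{(m,n)}_{\hat\lambda})P_\mu(\mathbf{x}^{(m,n)}_{\hat\lambda})\text{W}_{\hat\lambda}$. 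Since the continuous integral equals $1$ for $\mu=\lambda$ and $0$ otherwise by Proposition~\ref{GS-orthogonality:prp}, the assertion follows.

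In closer analogy with the proof of Proposition~\ref{GS-orthogonality:prp} one may argue directly instead: insert the determinantal formula \eqref{schur} into the discrete sum, cancel the two factors $1/\text{V}(\mathbf{x}^{(m,n)}_{\hat\lambda})$ against $\bigl(\text{V}(\mathbf{x}^{(m,n)}_{\hat\lambda})\bigr)^2$ in $\text{W}_{\hat\lambda}$, re-index the sum over $\hat\lambda\in\Lambda^{(m,n)}$ as a sum over the $n$-element subsets $\{\hat l_1>\cdots >\hat l_n\}$ of $\{0,1,\ldots ,m+n-1\}$, and invoke the discrete Cauchy--Binet identity to collapse the sum of products of two determinants into the single determinant $\det\bigl[\sum_{0\le l\le m+n-1}p_{\lambda_j+n-j}(x^{(m+n)}_l)\,p_{\mu_k+n-k}(x^{(m+n)}_l)\,\text{w}^{(m+n)}_l\bigr]_{1\le j,k\le n}$. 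The finite discrete orthogonality \eqref{finite-orthogonality}, applied with $m$ there replaced by $m+n-1$ (note that all indices $\lambda_j+n-j$ and $\mu_k+n-k$ lie in $\{0,\ldots ,m+n-1\}$ when $\lambda,\mu\in\Lambda^{(m,n)}$), identifies the $(j,k)$ entry with $\delta_{\lambda_j+n-j,\,\mu_k+n-k}$; since the sequences $(\lambda_j+n-j)_{j}$ and $(\mu_k+n-k)_{k}$ are strictly decreasing, the resulting Kronecker-delta matrix is the $n\times n$ identity when $\lambda=\mu$ and otherwise has a vanishing row, so its determinant is $1$, resp.\ $0$. In either route the only genuinely delicate step is the combinatorial bookkeeping --- the passage from all $n$-tuples of nodes to strictly decreasing ones, equivalently the subset-indexed form of Cauchy--Binet --- everything else being the degree estimate above together with the univariate facts recalled in Sections~\ref{sec2} and \ref{sec3}.
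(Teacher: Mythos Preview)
Your second route is exactly the paper's proof: insert the determinantal definition \eqref{schur}, cancel the Vandermonde factors, re-index over $n$-element subsets of $\{0,\ldots,m+n-1\}$, apply Cauchy--Binet to collapse the sum of products of determinants into a single determinant, and evaluate each entry via the univariate discrete orthogonality \eqref{finite-orthogonality} with $m$ replaced by $m+n-1$.

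Your first route is a genuine and correct alternative. Instead of Cauchy--Binet, you invoke the already-proved continuous orthogonality (Proposition~\ref{GS-orthogonality:prp}) and then discretize the integral by iterated one-variable Gauss quadrature, relying on the degree bound $\lambda_1+\mu_1+2(n-1)\le 2(m+n)-2$ in each variable. This is perhaps more conceptual---it explains the discrete relations as a consequence of exactness of the quadrature on the integrand---whereas the paper's argument is more self-contained, bypassing Proposition~\ref{GS-orthogonality:prp} and working directly from the univariate discrete orthogonality. One small slip: your displayed finite sum over $\hat l\in\{0,\ldots,m+n-1\}^n$ is missing the factor $\bigl(\text{V}(\mathbf{y}_{\hat l})\bigr)^2$ that the iterated quadrature leaves behind (the quadrature replaces only $\prod_j\text{w}(x_j)\,\text{d}x_j$, not the Vandermonde squared). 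You clearly intend it to be there---you invoke it in the next sentence and need it to reassemble $\text{W}_{\hat\lambda}$---so this is a typo rather than a gap.
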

\begin{proof}
Similarly to the proof of Proposition \ref{GS-orthogonality:prp},
the asserted orthogonality relations are derived  from  those
in Eq. \eqref{finite-orthogonality} by means of the Cauchy-Binet formula:
\begin{equation*}
\sum_{\hat{\lambda}\in \Lambda^{(m,n)}}
P_\lambda \bigl(\mathbf{x}^{(m,n)}_{\hat{\lambda}}\bigr)  P_\mu \bigl(\mathbf{x}^{(m,n)}_{\hat{\lambda}}\bigr)  \text{W}_{\hat{\lambda}}
 =
 \end{equation*}
 \begin{align*}
 \sum_{m+n>\tilde{\lambda}_1>\tilde{\lambda}_2>\cdots >\tilde{\lambda}_n\geq 0} &\left(
 \det \Biggl[ p_{\lambda_j+n-j}\Bigl(x^{(m+n)}_{\tilde{\lambda}_k}\Bigr)  \sqrt{\text{w}^{(m+n)}_{\tilde{\lambda}_k}}\Biggr]_{1\leq j,k\leq n}  \right. \\
& \left. \times \det \Biggl[ p_{\mu_j+n-j}\Bigl( x^{(m+n)}_{\tilde{\lambda}_k}\Bigr) \sqrt{\text{w}^{(m+n)}_{\tilde{\lambda}_k}}\Biggr]_{1\leq j,k\leq n}   \right) 
 \end{align*}
 \begin{align*}
& \stackrel{(i)}{=}\det  \left[     \sum_{0\leq \hat{l} < m+n}  p_{\lambda_j+n-j}\Bigl(x^{(m+n)}_{\hat{l}} \Bigr)  p_{\mu_k+n-k}\Bigl( x^{(m+n)}_{\hat{l}}  \Bigr)   \text{w}^{(m+n)}_{\hat{l}}  \right]_{1\leq j,k\leq n}  \\
&\stackrel{(ii)}{=} \begin{cases}   1 &\text{if}\ \mu = \lambda  \\  0  &\text{if}\  \mu\neq \lambda \end{cases}  
 \end{align*}
 (where it was assumed that $\lambda,\mu\in\Lambda^{(m,n)}$).
Here the equality $(i)$  hinges on the Cauchy-Binet formula while the equality $(ii)$ follows using Eq. \eqref{finite-orthogonality}.
\end{proof}

\begin{remark}\label{GS-dual-discrete-orthogonality:rem}
The alternative dual formulation of the discrete orthogonality in Proposition \ref{GS-discrete-orthogonality:prp} reads (cf. Eq. \eqref{finite-orthogonality:dual}):
\begin{align}
\sum_{\lambda\in \Lambda^{(m,n)}}
P_\lambda \bigl(\mathbf{x}^{(m,n)}_{\hat{\lambda}}\bigr)  P_\lambda \bigl(\mathbf{x}^{(m,n)}_{\hat{\mu}}\bigr) 
 =\begin{cases}   1/ \text{W}_{\hat{\lambda}} &\text{if}\ \hat{\mu} =\hat{ \lambda}\\  0  &\text{if}\  \hat{\mu}\neq \hat{\lambda}\end{cases}  
\end{align}
($\hat{\lambda},\hat{\mu} \in \Lambda^{(m,n)}$).
\end{remark}

\begin{remark}\label{GS-m+1:rem}
The orthogonality in Proposition \ref{GS-discrete-orthogonality:prp} extends in fact to the situation that $\lambda\in\Lambda^{(m+1,n)}$ and $\mu\in \Lambda^{(m,n)}$. Indeed, it is immediate from the definitions that
 if $\lambda_1=m+1$ then 
$P_\lambda \bigl(\mathbf{x}^{(m,n)}_{\hat{\lambda}}\bigr) =0$ for all $\hat{\lambda}\in \Lambda^{(m,n)}$ (cf. Remark \ref{m+1:rem}).
\end{remark}

\section{Gaussian cubature for symmetric functions}\label{sec4}
For $\lambda\in \Lambda^{(n)}$, let us define the symmetric monomial
\begin{equation}\label{monomials}
M_\lambda (\mathbf{x}):=  \sum_{\mu \in S_n \lambda} x_1^{\mu_1} x_2^{\mu_2}\cdots x_n^{\mu_n},
\end{equation}
where the summation is meant over  the orbit  $S_n\lambda$ of $\lambda$ with respect to the standard action of the permutation group $S_n$ on the components:
\begin{equation}
\lambda=(\lambda_1,\ldots ,\lambda_n) \stackrel{\sigma}{\to} (\lambda_{\sigma^{-1}(1)},\ldots ,\lambda_{\sigma^{-1}(n)})=:\sigma \lambda 
\end{equation}
for $\sigma = \left( \begin{matrix} 1& 2& \cdots & n \\
 \sigma (1)&\sigma (2)&\cdots & \sigma (n)
 \end{matrix}\right) \in S_n$. Clearly  $M_\lambda (\mathbf{x})$ is homogeneous of total degree
 \begin{equation}
 |\lambda|:=\lambda_1+\lambda_2+\cdots +\lambda_n.
 \end{equation}
Upon restricting the (inhomogeneous) dominance order
 \begin{equation}
\forall\mu,\lambda\in\mathbb{Z}^n:\qquad \mu\leq\lambda \Leftrightarrow \sum_{1\leq j\leq k} (\lambda_j-\mu_j)\geq 0\quad\text{for}\quad k=1,\ldots ,n
 \end{equation}
from $\mathbb{Z}^n$ to $\Lambda^{(n)}$,  this partial ordering is inherited by monomial basis $M_\lambda (\mathbf{x})$, $\lambda\in\Lambda^{(n)}$.

 Let $\mathbb{P}^{(m,n)}$ denote the $\binom{m+n}{n}$-dimensional subspace of the algebra of symmetric polynomials spanned by $M_\lambda (\mathbf{x})$, $\lambda\in\Lambda^{(m,n)}$.  Notice that these are precisely the monomials $M_\lambda (\mathbf{x})$ with $\lambda\in\Lambda^{(n)}$
 such that  $\lambda\subseteq (m)_n:=(m,\ldots,m)\in\Lambda^{(n)}$ (where for $\lambda,\mu\in\Lambda^{(n)}$ one writes $\lambda\subseteq\mu$ iff $\lambda_j\leq\mu_j$ for $j=1,\ldots,n$). In other words, $\mathbb{P}^{(m,n)}$ consists of all symmetric polynomials of degree at most $m$ in each of the variables $x_j$ ($j\in \{ 1,\ldots ,n\}$).
 
 \begin{lemma}[Generalized Schur basis]\label{GS-basis:lem}
 The generalized Schur polynomials $P_\lambda (\mathbf{x})$, $\lambda\in\Lambda^{(m,n)}$ constitute a basis for $\mathbb{P}^{(m,n)}$.
 \end{lemma}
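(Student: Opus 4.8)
The plan is to exhibit the generalized Schur polynomials indexed by $\Lambda^{(m,n)}$ as a triangular transform of the symmetric monomial basis $M_\mu(\mathbf{x})$, $\mu\in\Lambda^{(m,n)}$, of $\mathbb{P}^{(m,n)}$ with respect to the dominance partial order. Since $|\Lambda^{(m,n)}|=\binom{m+n}{n}=\dim\mathbb{P}^{(m,n)}$, a triangular transition matrix with nonvanishing diagonal entries is automatically invertible, which gives the assertion.

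Concretely, I would write $p_l(x)=\alpha_l x^l+(\text{terms of degree}<l)$, plug these expansions into the entries of the determinant in \eqref{schur}, and use multilinearity in the rows to express $\det [ p_{\lambda_j+n-j}(x_k)]_{1\le j,k\le n}$ as a linear combination of ``generalized Vandermonde'' determinants $\det[x_k^{i_j}]_{1\le j,k\le n}$ with $0\le i_j\le\lambda_j+n-j$. Such a determinant vanishes whenever two of the exponents coincide; otherwise, letting $w\in S_n$ be the permutation for which $\mu_j:=i_{w(j)}$ is strictly decreasing and putting $\nu_j:=\mu_j-(n-j)$ (so that $\nu\in\Lambda^{(n)}$), the bialternant formula gives $\det[x_k^{i_j}]_{j,k}=\text{sgn}(w)\,\text{V}(\mathbf{x})\,s_\nu(\mathbf{x})$, with $s_\nu$ the classical Schur polynomial attached to $\nu$. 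Dividing by $\text{V}(\mathbf{x})$ thus presents $P_\lambda(\mathbf{x})$ as an explicit linear combination of the $s_\nu(\mathbf{x})$.

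The crux is the dominance bookkeeping, and this is the step I expect to demand the most care. Because $\lambda_1+n-1>\lambda_2+n-2>\cdots>\lambda_n$ is strictly decreasing, for any pairwise distinct exponents $i_j\le\lambda_j+n-j$ and any $k$ the sum of the $k$ largest of the $i_j$ is at most $\sum_{1\le j\le k}(\lambda_j+n-j)$; equivalently $\sum_{1\le j\le k}\mu_j\le\sum_{1\le j\le k}(\lambda_j+n-j)$, so $\nu\le\lambda$ in the dominance order, with equality forcing $i_j=\lambda_j+n-j$ for all $j$ (and then $w$ the identity). Since moreover $\max_j i_j\le\lambda_1+n-1\le m+n-1$, one has $\nu_1\le m$, i.e. $\nu\in\Lambda^{(m,n)}$. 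Combining this with the (homogeneous) monomial unitriangularity $s_\nu=M_\nu+\sum_{\kappa<\nu}K_{\nu\kappa}M_\kappa$ of the classical Schur polynomials, we arrive at
\begin{equation*}
P_\lambda(\mathbf{x})=\Bigl(\prod_{1\le j\le n}\alpha_{\lambda_j+n-j}\Bigr)M_\lambda(\mathbf{x})+\sum_{\substack{\mu\in\Lambda^{(m,n)}\\ \mu<\lambda}}c_{\lambda\mu}M_\mu(\mathbf{x}),
\end{equation*}
whose diagonal coefficient is nonzero because $\alpha_l=1/(p_l,m_l)_{\text{w}}\ne 0$. Hence the transition matrix between $\{P_\lambda\}_{\lambda\in\Lambda^{(m,n)}}$ and $\{M_\lambda\}_{\lambda\in\Lambda^{(m,n)}}$ is triangular and invertible, so the former is a basis of $\mathbb{P}^{(m,n)}$.

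If one prefers to bypass the cardinality count, linear independence of the $P_\lambda$, $\lambda\in\Lambda^{(m,n)}$, also drops out of Proposition \ref{GS-discrete-orthogonality:prp}: a polynomial identity $\sum_{\lambda}c_\lambda P_\lambda\equiv 0$ holds in particular at all the nodes $\mathbf{x}^{(m,n)}_{\hat{\lambda}}$, and forming $\sum_{\hat{\lambda}\in\Lambda^{(m,n)}}\bigl(\sum_\lambda c_\lambda P_\lambda(\mathbf{x}^{(m,n)}_{\hat\lambda})\bigr)P_\mu(\mathbf{x}^{(m,n)}_{\hat\lambda})\,\text{W}_{\hat\lambda}$ isolates $c_\mu=0$; together with $\dim\mathbb{P}^{(m,n)}=\binom{m+n}{n}=|\Lambda^{(m,n)}|$ this again yields a basis.
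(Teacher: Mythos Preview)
Your proof is correct and follows essentially the same route as the paper's: both establish membership $P_\lambda\in\mathbb{P}^{(m,n)}$ and then reduce to the dominance-unitriangularity of the classical Schur polynomials in the monomial basis. The paper is slightly terser---it just observes that the top homogeneous component of $P_\lambda$ is a nonzero multiple of $s_\lambda$ and invokes that $\{s_\lambda:\lambda\in\Lambda^{(m,n)}\}$ is a basis of $\mathbb{P}^{(m,n)}$---whereas you carry out the full row expansion and dominance bookkeeping explicitly; your alternative via Proposition~\ref{GS-discrete-orthogonality:prp} is a valid (and non-circular) shortcut for linear independence that the paper does not use.
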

 
 \begin{proof}
Since the numerator is a polynomial in $x_j$ of degree at most $\lambda_1+n-1$ and the Vandermonde determinant is of degree $n-1$ in $x_j$,  it is clear that the generalized Schur polynomial  $P_\lambda (\mathbf{x})$, \eqref{schur}, \eqref{vandermonde} belongs to $\mathbb{P}^{(m,n)}$ when $\lambda\in\Lambda^{(m,n)}$.
Moreover, 
if we replace $p_l(x)$ on the RHS of Eq. \eqref{schur} by $x^l$, then we recover a classic determinantal formula for the conventional Schur polynomial $S_\lambda (\mathbf{x})$
(cf. e.g. \cite[Equation (0.1)]{mac:schur}). Hence---up to normalization---the top-degree terms of $P_\lambda (\mathbf{x})$ are given by $S_\lambda (\mathbf{x})$.
It is therefore enough to infer that the Schur polynomials $S_\lambda (\mathbf{x})$, $\lambda\in\Lambda^{(m,n)}$ provide a basis for $\mathbb{P}^{(m,n)}$. This, however, is immediate  from the well-known
fact that the expansion of the Schur polynomials on the monomial basis is  unitriangular with respect to the dominance partial order (cf. e.g. \cite[Chapter I.6]{mac:symmetric}):
\begin{equation*}
S_\lambda (\mathbf{x})= M_\lambda (\mathbf{x})+ \sum_{\substack{\mu\in\Lambda^{(n)}, \, \mu <\lambda\\ |\mu|=|\lambda|}} C^\mu_\lambda M_\mu (\mathbf{x})
\end{equation*}
for certain (nonnegative integral) coefficients $C^{\mu}_{\lambda} $.
 \end{proof}

After these preparations we are now in the position to reformulate the orthogonality relations of Propositions \ref{GS-orthogonality:prp} and \ref{GS-discrete-orthogonality:prp} 
as a cubature rule for the integration of symmetric functions in $n$  variables over
$(a,b)^n\subseteq\mathbb{R}^n$. The resulting cubature formula---which provides a multivariate extension of the celebrated Gauss quadrature rule \eqref{gqr:a}, \eqref{gqr:b}---was originally found
by Berens, Schmid and Xu,  cf. \cite[Equation (8)]{ber-sch-xu:multivariate} (with $\rho=0$) and \cite[Chapter 5.4]{dun-xu:orthogonal}. 

\begin{proposition}[Exact  Gaussian Cubature Rule in $\mathbb{P}^{(2m+1,n)}$]\label{gauss-cubature:prp}
For $f(\mathbf{x})\in\mathbb{P}^{(2m+1,n)}$, one has that
\begin{equation}\label{GS-cubature}
\frac{1}{n!} \int_a^{b}\cdots\int_a^b 
f (\mathbf{x})  \emph{W}(\mathbf{x}) \emph{d} x_1\cdots\emph{d} x_n 
 =\sum_{\hat{\lambda}\in \Lambda^{(m,n)}}
f \bigl(\mathbf{x}^{(m,n)}_{\hat{\lambda}}\bigr)   \emph{W}_{\hat{\lambda}} ,
\end{equation}
where $\emph{W}(\mathbf{x})$, $\mathbf{x}^{(m,n)}_{\hat{\lambda}}$ and $\emph{W}_{\hat{\lambda}} $ are drawn from Eqs. \eqref{W}, \eqref{X-nodes}, and \eqref{W-discrete}, respectively.
\end{proposition}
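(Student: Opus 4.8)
The plan is to combine bilinearity of the asserted identity with the two orthogonality relations already established: Proposition~\ref{GS-orthogonality:prp} will handle the integral on the left, while Proposition~\ref{GS-discrete-orthogonality:prp}---in the slightly widened range supplied by Remark~\ref{GS-m+1:rem}---will handle the sum on the right. Since both sides of \eqref{GS-cubature} depend linearly on $f$, it suffices to verify the identity on a spanning set of $\mathbb{P}^{(2m+1,n)}$. I would use the products $P_\lambda(\mathbf{x})P_\mu(\mathbf{x})$ with $\lambda\in\Lambda^{(m+1,n)}$ and $\mu\in\Lambda^{(m,n)}$: such a product has degree at most $(m+1)+m=2m+1$ in each variable, so it lies in $\mathbb{P}^{(2m+1,n)}$; by Proposition~\ref{GS-orthogonality:prp} (applicable since $\lambda,\mu\in\Lambda^{(n)}$) the left-hand side of \eqref{GS-cubature} for $f=P_\lambda P_\mu$ equals $\delta_{\lambda\mu}$; and by Proposition~\ref{GS-discrete-orthogonality:prp} together with Remark~\ref{GS-m+1:rem} (which is exactly what permits $\lambda$ to range up to $\Lambda^{(m+1,n)}$) the right-hand side also equals $\delta_{\lambda\mu}$. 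So the two sides agree on every such product, and expressing an arbitrary $f\in\mathbb{P}^{(2m+1,n)}$ as a finite linear combination of them then finishes the argument.

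The substantive step is therefore the spanning claim: $\{P_\lambda P_\mu:\lambda\in\Lambda^{(m+1,n)},\ \mu\in\Lambda^{(m,n)}\}$ spans $\mathbb{P}^{(2m+1,n)}$. I would prove this by a triangularity argument against the Schur basis $\{S_\nu:\nu\in\Lambda^{(2m+1,n)}\}$ of $\mathbb{P}^{(2m+1,n)}$ (whose basis property is recalled in the proof of Lemma~\ref{GS-basis:lem}). Two elementary ingredients enter. First, a splitting of partitions: given $\nu\in\Lambda^{(2m+1,n)}$, put $\mu_j:=\max(\nu_j-m-1,0)$ and $\lambda_j:=\min(\nu_j,m+1)=\nu_j-\mu_j$; then $\lambda\in\Lambda^{(m+1,n)}$, $\mu\in\Lambda^{(m,n)}$ and $\lambda+\mu=\nu$, the monotonicity and the bounds being immediate from $0\le\nu_n\le\cdots\le\nu_1\le 2m+1$. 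Second, the fact---already used in the proof of Lemma~\ref{GS-basis:lem}---that the top homogeneous component of $P_\lambda$ is a nonzero scalar multiple of the Schur polynomial $S_\lambda$, combined with the dominance triangularity of a product of Schur polynomials, $S_\lambda S_\mu=S_{\lambda+\mu}+\sum_{\nu<\lambda+\mu,\ |\nu|=|\lambda|+|\mu|}(\cdots)\,S_\nu$ (see \cite[Chapter~I.6]{mac:symmetric}). Together these yield $P_\lambda P_\mu=c_{\lambda,\mu}\,S_{\lambda+\mu}+(\text{a combination of }S_\nu\text{ with }|\nu|<|\lambda+\mu|,\text{ or with }|\nu|=|\lambda+\mu|\text{ and }\nu<\lambda+\mu)$, with $c_{\lambda,\mu}\neq 0$. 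Picking for each $\nu\in\Lambda^{(2m+1,n)}$ the split just described, and ordering $\Lambda^{(2m+1,n)}$ by any total order refining ``lower total degree, then lower in dominance'', the transition matrix from these chosen products to $\{S_\nu\}$ is triangular with nonvanishing diagonal; hence the chosen products already form a basis of $\mathbb{P}^{(2m+1,n)}$, and a fortiori the whole family spans.

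The only real obstacle I foresee is this spanning claim, and even there both sub-steps---the explicit partition split and the triangularity of a Schur product---are routine, the rest being linear algebra and bookkeeping. Two small points deserve attention: the degree count must come out as $2m+1=(m+1)+m$ so that the products land precisely in $\mathbb{P}^{(2m+1,n)}$, and one genuinely needs Remark~\ref{GS-m+1:rem} rather than Proposition~\ref{GS-discrete-orthogonality:prp} alone, because in the chosen split the first partition $\lambda$ does attain $\lambda_1=m+1$. Once the spanning claim is in place, bilinearity of both sides of \eqref{GS-cubature} delivers the proposition.
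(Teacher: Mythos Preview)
Your proof is correct and follows essentially the same strategy as the paper: verify \eqref{GS-cubature} on the products $P_\lambda P_\mu$ with $\lambda\in\Lambda^{(m+1,n)}$, $\mu\in\Lambda^{(m,n)}$ via Propositions~\ref{GS-orthogonality:prp}, \ref{GS-discrete-orthogonality:prp} and Remark~\ref{GS-m+1:rem}, and then extend by linearity once these products are shown to span $\mathbb{P}^{(2m+1,n)}$. The only difference is in the spanning step: the paper first transfers the identity from $P_\lambda P_\mu$ to $M_\lambda M_\mu$ using Lemma~\ref{GS-basis:lem} and then invokes the (almost tautological) dominance triangularity of monomial products, whereas you stay with the $P_\lambda P_\mu$ and argue triangularity against the Schur basis via an explicit splitting $\nu=\lambda+\mu$ and the Schur-product expansion---a small variation, but both routes are equally valid.
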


\begin{proof}
By comparing the orthogonality relations in Propositions 
\ref{GS-orthogonality:prp} and \ref{GS-discrete-orthogonality:prp}---while also recalling Remark \ref{GS-m+1:rem}---it is plain that the cubature rule in Eq. \eqref{GS-cubature} is valid for $f(\mathbf{x})=P_\lambda (\mathbf{x})P_\mu(\mathbf{x})$
with $\lambda\in\Lambda^{(m+1,n)}$ and $\mu\in\Lambda^{(m,n)}$. By Lemma \ref{GS-basis:lem} and the bilinearity, the same is thus true for $f(\mathbf{x})=M_\lambda (\mathbf{x})M_\mu(\mathbf{x})$
with $\lambda\in\Lambda^{(m+1,n)}$ and $\mu\in\Lambda^{(m,n)}$. Since $\sigma\lambda \leq \lambda$ for all $\lambda\in\Lambda^{(n)}$ and $\sigma\in S_n$
(cf. e.g. \cite[Chapter III.13.2]{hum:introduction}), it is clear from Eq. \eqref{monomials} that
\begin{equation*}
M_\lambda (\mathbf{x})M_\mu(\mathbf{x})=M_{\lambda+\mu} (\mathbf{x})+\sum_{\substack{\nu\in\Lambda^{(n)} ,\, \nu< \lambda+\mu\\ |\nu|=|\lambda +\mu|}}  C^{\nu}_{\lambda,\mu} M_{\nu} (\mathbf{x})
\end{equation*}
for certain (nonnegative integral) coefficients $C^{\nu}_{\lambda,\mu} $. Hence, 
the products  $M_\lambda (\mathbf{x})M_\mu(\mathbf{x})$
 span the space $\mathbb{P}^{(2m+1,n)}$ as $\lambda$ and $\mu$ vary over  $\Lambda^{(m+1,n)}$ and $\Lambda^{(m,n)}$, respectively. The asserted cubature rule now follows for general  $f(\mathbf{x})\in\mathbb{P}^{(2m+1,n)}$ by linearity.
 \end{proof}

\begin{remark}\label{coordinate-trafo:rem}
Since any symmetric polynomial can be uniquely written as a polynomial expression in the elementary symmetric monomials,
the change of variables $\mathbf{x}\to\mathbf{y}=(y_1,\ldots, y_n)$ given by
\begin{equation}\label{cov}
y_k=E_k(x_1,\ldots ,x_n):= \sum_{1\leq j_1 < j_2 <\cdots <j_k\leq n}  x_{j_1} x_{j_2}\cdots x_{j_k}\quad   (k=1,\ldots ,n),
\end{equation}
induces a linear isomorphism between  $\mathbb{P}^{(m,n)}$ and the space $\Pi^{(m,n)}$ of all (not necessarily symmetric) polynomials in the variables $y_1,\ldots ,y_n$ of total degree $\leq m$. In particular, $\dim (\Pi^{(m,n)})  = \dim (\mathbb{P}^{(m,n)})= \binom{m+n}{m}$.
Under this change of variables, the cubature formula in Eq. \eqref{GS-cubature} transforms into an exact cubature formula in $\Pi^{(2m+1,n)}$
supported on $\dim (\Pi^{(m,n)})$ nodes that was detailed explicitly in \cite[Equation (2)]{ber-sch-xu:multivariate}. Since it is well-known that any exact cubature rule in $\Pi^{(2m+1,n)}$ involves function evaluations on at least
$\dim (\Pi^{(m,n)})$ nodes (cf. e.g. \cite[Chapter 3.8]{dun-xu:orthogonal} and references therein), it follows via the change of variables in Eq. \eqref{cov} that similarly any exact cubature rule in
 $\mathbb{P}^{(2m+1,n)}$ involves function evaluations on at least
$\dim (\mathbb{P}^{(m,n)})$ nodes.  Following standard terminology  \cite[Chapter 3.8]{dun-xu:orthogonal}, here we refer to exact cubature rules in $\mathbb{P}^{(2m+1,n)}$ supported on  precisely (the minimal possible number of) $\dim (\mathbb{P}^{(m,n)})$ nodes as being \emph{Gaussian}.  From this perspective, Propostion \ref{gauss-cubature:prp} is to be viewed as a concrete example of a Gaussian cubature rule in $\mathbb{P}^{(2m+1,n)}$. Notice in this connection also that---in view of  Remark \ref{GS-m+1:rem}---the nodes $\mathbf{x}^{(m,n)}_{\hat{\lambda}}$, $\hat{\lambda}\in\Lambda^{(m,n)}$ are common zeros of all  $\binom{m+n}{m+1}$ basis polynomials $P_\lambda (\mathbf{x})$, $\lambda\in\Lambda^{(n)}$ that are precisely of degree $m+1$ in each of the variables $x_j$ ($j\in \{ 1,\ldots ,n\}$), cf.
 \cite[Theorem 3.8.4]{dun-xu:orthogonal}.
\end{remark}

\section{The classical orthogonal families: cubature rules for unitary random matrix ensembles}\label{sec5}
By specializing $p_l(x)$, $l=0,1,2,\ldots$ to the classical orthogonal families of Hermite, Laguerre and Jacobi type, Proposition \ref{gauss-cubature:prp} provides cubature rules
for the exact integration of $f(\mathbf{x})\in\mathbb{P}^{(2m+1,n)}$ with respect to the densities of the Gaussian unitary ensemble, the Laguerre unitary ensemble,
and the Jacobi unitary ensemble, respectively.

\subsection{Gaussian unitary ensemble}
The normalized Hermite polynomials
\begin{equation*}
h_l(x)={\textstyle \frac{1}{\sqrt{2^l l! }} } H_l(x), \quad l=0,1,2,\ldots
\end{equation*}
 constitute an orthonormal basis on the interval $(a,b)=(-\infty,\infty)$ with respect to the weight function
 $w(x)=\frac{1}{\sqrt{\pi}} e^{-x^2}$  \cite[Chapter 18]{olv-loz-boi-cla:nist}.
 At the $(\hat{l}+1)$th root $x^{(m+1)}_{\hat{l}}$ of $h_{m+1}(x)$ the corresponding Christoffel weight is given by
 (cf. e.g. \cite[Chapter 15.3]{sze:orthogonal} or \cite[Chapter 3.6]{dav-rab:methods})
 \begin{equation*}
 \text{w}^{(m+1)}_{\hat{l}}=\Bigl((m+1)h^2_m\bigl(  x^{(m+1)}_{\hat{l}}  \bigr)\Bigr)^{-1}\quad (0\leq\hat{l}\leq m).
 \end{equation*}
 In this situation Proposition \ref{gauss-cubature:prp} gives rise to the following Gauss-Hermite cubature rule for the integration of
  $f(\mathbf{x})\in\mathbb{P}^{(2m+1,n)}$ with respect to the density of the Gaussian unitary ensemble
  (cf. e.g. \cite[Chapter 3.3]{meh:random} or \cite[Chapter 1.3]{for:log-gases}): 
  \begin{subequations}
 \begin{align}\label{gauss-hermite-cubature-rule}
 \frac{1}{\pi^{\frac{n}{2}} n!}  & \int_{-\infty}^\infty\cdots \int_{-\infty}^\infty
 f(\mathbf{x})  \prod_{1\leq j\leq n} e^{-x_j^2}    \prod_{1\leq j<k\leq n} (x_j-x_k)^2 \text{d}x_1\cdots\text{d}x_n\\
&=\sum_{\hat{\lambda}\in\Lambda^{(m,n)}}   
f\bigl(\mathbf{x}^{(m,n)}_{\hat{\lambda}}\bigr)  \text{W}_{\hat{\lambda}} , \nonumber
 \end{align}
where
\begin{align}\label{gaus-hermite-cubature-weights}
\text{W}_{\hat{\lambda}}=&
\frac{1}{(m+n)^n} \prod_{1\leq j\leq n} 
 \Bigl( h_{m+n-1} \bigl(  x^{(m+n)}_{\hat{\lambda}_j+n-j}  \bigr)\Bigr)^{-2} \\
 &\times \prod_{1\leq j<k\leq n}  \Bigl(   x^{(m+n)}_{\hat{\lambda}_j+n-j} -x^{(m+n)}_{\hat{\lambda}_k+n-k}       \Bigr)^2. \nonumber
\end{align}
 \end{subequations}

\subsection{Laguerre unitary ensemble}
For $\alpha>-1$ the normalized Laguerre polynomials
\begin{equation*}
\ell^{(\alpha)}_l(x)={\textstyle \sqrt{\frac{l!}{\Gamma (l+1+\alpha)}} }  L^{(\alpha)}_l(x),\quad l=0,1,2,\ldots
\end{equation*}
are orthonormal on the interval $(a,b)=(0,\infty)$
with respect to the weight function
 $w(x)=x^\alpha e^{-x}$  \cite[Chapter 18]{olv-loz-boi-cla:nist}.
 The Christoffel weight at the $(\hat{l}+1)$th root $x^{(m+1)}_{\hat{l}}$ of $\ell^{(\alpha)}_{m+1}(x)$ reads 
 (cf. e.g. \cite[Chapter 15.3]{sze:orthogonal} or \cite[Chapter 3.6]{dav-rab:methods})
 \begin{equation*}
 \text{w}^{(m+1)}_{\hat{l}}= \left((m+1)\, x^{(m+1)}_{\hat{l}}  \left(  \ell^{(\alpha+1)}_m\bigl(  x^{(m+1)}_{\hat{l}}  \bigr)\right)^2 \right)^{-1}\quad (0\leq\hat{l}\leq m).
 \end{equation*}
 The corresponding Gauss-Laguerre cubature rule from Proposition \ref{gauss-cubature:prp} permits
 the exact integration of
  $f(\mathbf{x})\in\mathbb{P}^{(2m+1,n)}$ with respect to the density of the Laguerre unitary ensemble
  (cf. e.g. \cite[Chapter 19]{meh:random} or \cite[Chapter 3]{for:log-gases}): 
  \begin{subequations}
 \begin{align}\label{gauss-laguerre-cubature-rule}
 \frac{1}{ n!} \int_{0}^\infty\cdots \int_{0}^\infty&
 f(\mathbf{x}) \prod_{1\leq j\leq n} x_j^\alpha e^{-x_j}    \prod_{1\leq j<k\leq n} (x_j-x_k)^2 \text{d}x_1\cdots\text{d}x_n\\
&=\sum_{\hat{\lambda}\in\Lambda^{(m,n)}}   
f\bigl(\mathbf{x}^{(m,n)}_{\hat{\lambda}}\bigr)  \text{W}_{\hat{\lambda}} ,  \nonumber
 \end{align}
where
\begin{align}\label{gauss-laguerre-cubature-weights}
\text{W}_{\hat{\lambda}}=&
\frac{1}{(m+n)^n } \prod_{1\leq j\leq n} \left(x^{(m+n)}_{\hat{\lambda}_j+n-j} \right)^{-1}
 \Bigl( \ell^{(\alpha+1)}_{m+n-1} \bigl(  x^{(m+n)}_{\hat{\lambda}_j+n-j}  \bigr)\Bigr)^{-2} \\
&\times  \prod_{1\leq j<k\leq n}  \Bigl(   x^{(m+n)}_{\hat{\lambda}_j+n-j} -x^{(m+n)}_{\hat{\lambda}_k+n-k}       \Bigr)^2. \nonumber
\end{align}
\end{subequations}

\subsection{Jacobi  unitary ensemble}
For $\alpha,\beta >-1$ the normalized Jacobi polynomials
\begin{equation*}
p^{(\alpha,\beta)}_l(x)={\textstyle \sqrt{\frac{(2l+1+\alpha+\beta)\Gamma (l+1+\alpha+\beta)l!}{2^{\alpha+\beta+1}\Gamma (l+1+\alpha)\Gamma (l+1+\beta)}} }
  P^{(\alpha,\beta)}_l(x), \quad
l=0,1,2,\ldots
\end{equation*}
are orthonormal on the interval $(a,b)=(-1,1)$
with respect to the weight function
 $w(x)=(1-x)^\alpha (1+x)^\beta $  \cite[Chapter 18]{olv-loz-boi-cla:nist}.
 The Christoffel weight  at the $(\hat{l}+1)$th root $x^{(m+1)}_{\hat{l}}$ of $p^{(\alpha,\beta)}_{m+1}(x)$ is given by
 (cf. e.g. \cite[Chapter 15.3]{sze:orthogonal})
 \begin{equation*}
 \text{w}^{(m+1)}_{\hat{l}}=\frac{(2m+3+\alpha+\beta)}{(m+1)(m+2+\alpha+\beta)  \left( 1-\bigl( x^{(m+1)}_{\hat{l}}  \bigr)^2\right) \left( p^{(\alpha+1,\beta+1)}_m\bigl(  x^{(m+1)}_{\hat{l}}  \bigr)\right)^2 }
 \end{equation*}
 ($0\leq\hat{l}\leq m$).
 The corresponding Gauss-Jacobi cubature rule from Proposition \ref{gauss-cubature:prp} permits the exact integration of
  $f(\mathbf{x})\in\mathbb{P}^{(2m+1,n)}$ with respect to the density of the Jacobi unitary ensemble
  (cf. e.g. \cite[Chapter 19]{meh:random} or \cite[Chapter 3]{for:log-gases}): 
  \begin{subequations}
 \begin{align}\label{gauss-jacobi-cubature-rule}
 \frac{1}{ n!} \int_{-1}^1 \cdots \int_{-1}^1 &
 f(\mathbf{x}) \prod_{1\leq j\leq n} (1-x_j)^\alpha (1+x_j)^\beta   \prod_{1\leq j<k\leq n} (x_j-x_k)^2 \text{d}x_1\cdots\text{d}x_n\\
&=\sum_{\hat{\lambda}\in\Lambda^{(m,n)}}   
f\bigl(\mathbf{x}^{(m,n)}_{\hat{\lambda}}\bigr)  \text{W}_{\hat{\lambda}} , \nonumber
 \end{align}
where
\begin{align}
\text{W}_{\hat{\lambda}}=&
{\textstyle \frac{(2m+2n+1+\alpha+\beta)^n}{(m+n)^n (m+n+1+\alpha+\beta)^n} }\prod_{1\leq j\leq n}  \left( 1-\bigl( x^{(m+n)}_{\hat{\lambda}_j+n-j}  \bigr)^2\right)^{-1}
 \Bigl( p^{(\alpha+1,\beta+1)}_{m+n-1} \bigl(  x^{(m+n)}_{\hat{\lambda}_j+n-j}  \bigr)\Bigr)^{-2} \nonumber \\
&\times  \prod_{1\leq j<k\leq n}  \Bigl(   x^{(m+n)}_{\hat{\lambda}_j+n-j} -x^{(m+n)}_{\hat{\lambda}_k+n-k}       \Bigr)^2. \label{gauss-jacobi-cubature-weights}
\end{align}
\end{subequations}

\begin{remark}
For the Hermite, the Laguerre and the Jacobi families the orthogonality relations of
the associated symmetric polynomials $P_\lambda (\mathbf{x})$, $\lambda\in\Lambda^{(n)}$  \eqref{schur}, \eqref{vandermonde}  originating from Proposition \ref{GS-orthogonality:prp}
were pointed out in  Refs. \cite{las:jacobi,las:laguerre,las:hermite}.   In these special cases, Proposition \ref{GS-discrete-orthogonality:prp} now provides 
the complementary discrete
 orthogonality relations underpinning the cubature rules in Eqs. \eqref{gauss-hermite-cubature-rule}, \eqref{gaus-hermite-cubature-weights}, Eqs. \eqref{gauss-laguerre-cubature-rule}, \eqref{gauss-laguerre-cubature-weights}, and Eqs. \eqref{gauss-jacobi-cubature-rule}, \eqref{gauss-jacobi-cubature-weights}.
 \end{remark}

\begin{remark}\label{n=2:rem}
For $n=2$ the bivariate Gaussian cubature rule stemming from Proposition \ref{gauss-cubature:prp} was first formulated in \cite[Equation (1.4)]{sch-xu:bivariate}
(in the symmetrized coordinates $y_1=x_1+x_2$ and $y_2=x_1x_2$ of Remark \ref{coordinate-trafo:rem}).
A more detailed study of the corresponding bivariate Gauss-Jacobi cubature \eqref{gauss-jacobi-cubature-rule}, \eqref{gauss-jacobi-cubature-weights}
 can be found in  \cite{xu:minimal-1,xu:minimal-2}.
\end{remark}

\section{The Bernstein-Szeg\"o polynomials}\label{sec6}
For parameters $\alpha,\beta\in \{ \frac{1}{2}, -\frac{1}{2}\}$, the Gauss-Jacobi cubature \eqref{gauss-jacobi-cubature-rule}, \eqref{gauss-jacobi-cubature-weights}
specializes to more elementary Gauss-Chebyshev cubature rules. For $n=2$ such bivariate Gauss-Chebyshev cubatures were highlighted in \cite{xu:minimal-1,xu:minimal-2} (cf. Remark \ref{n=2:rem} above). For general $n$, a systematic study of closely related Gauss-Chebyshev cubature formulas  was carried out
in  Refs. \cite{moo-pat:cubature,moo-mot-pat:gaussian,hri-mot:discrete,hri-mot-pat:cubature} within the framework of compact simple Lie groups.  From this perspective, the Gauss-Chebyshev cubatures arising here turn out to be associated with the classical Lie groups of  type $B_n$, $C_n$ and $D_n$. 
In the remainder of the paper, we employ the Bernstein-Szeg\"o polynomials \cite[Section 2.6]{sze:orthogonal} to construct   (rational) generalizations of the Gauss-Chebyshev cubatures
stemming from Eqs. \eqref{gauss-jacobi-cubature-rule}, \eqref{gauss-jacobi-cubature-weights} when $\alpha,\beta\in \{ \frac{1}{2}, -\frac{1}{2}\}$.
To this end it will be convenient to pass to trigonometric variables from now on:
\begin{equation*}
\boxed{x=\cos (\xi), \qquad 0\leq \xi\leq \pi.}
\end{equation*}

 By definition (cf.  \cite[Section 2.6]{sze:orthogonal}), the Bernstein-Szeg\"o polynomial $p_l\bigl(\cos ( \xi )\bigr)$ serving our purposes is a polynomial of degree $l$ in $x=\cos( \xi)$ such that
the sequence $p_0\bigl(\cos(\xi)\bigr),p_1\bigl(\cos(\xi)\bigr),p_2\bigl(\cos (\xi)\bigr),\ldots $ provides an orthonormal basis of the Hilbert space $L^2((0,\pi),w(\xi) \text{d}\xi)$, where  the weight function is of the form
\begin{equation}\label{bs-wf}
w (\xi) :=  \frac{{ 2^{\epsilon_+ + \epsilon_- }\bigl(1+\epsilon_+ \cos ( \xi) \bigr)\bigl(1-\epsilon_-\cos (\xi)\bigr)}}{2\pi \prod_{1\leq r\leq d} (1+2a_r\cos (\xi )+a_r^2)}     \qquad (0< |a_r| <1)
\end{equation}
($r=1,\ldots ,d$). Here {$\epsilon_\pm\in\{0,1\}$} and it is moreover assumed (throughout)  that any complex parameters $a_r$ occur in complex conjugate pairs (so ${w}(\xi )$ remains positive and bounded on the interval $(0,\pi)$).

It is well known---cf.  \cite[Section 2.6]{sze:orthogonal}---that 
for $\boxed{{\textstyle l\geq d_\epsilon:= \frac{d-\epsilon_+ - \epsilon_-}{2}}}$, the Bernstein-Szeg\"o polynomial is given by an explicit formula of the form:
\begin{subequations}
\begin{equation}\label{bs-explicit}
p_l(\cos(\xi)) =\Delta_l^{1/2} \Bigl( {c}(\xi)e^{il \xi} + {c}(-\xi)e^{-il\xi}\Bigr) ,
\end{equation}
where
\begin{equation}\label{c-f}
{c}(\xi):=  {(1+\epsilon_+e^{-i\xi})^{-1} (1-\epsilon_-e^{-i\xi})^{-1}}\prod_{1\leq r \leq d} (1+a_r e^{-i\xi})
\end{equation}
(so ${w}(\xi)=1/(2\pi {c}(\xi){c}(-\xi))=1/(2\pi |{c}(\xi)|^{2})$) and
\begin{equation}\label{bs-weights}
\Delta_l  :=  
 \begin{cases} {\bigl(1 + (-1)^{\epsilon_-} \prod_{1\leq r\leq d}a_r\bigr)^{-1}}  &\text{if}\  l={ d_\epsilon} ,\\
 1 &\text{if}\ l > {d_\epsilon}.
 \end{cases} 
 \end{equation}
\end{subequations}

\begin{remark}\label{chebyshev:rem}
For $d=0$ the Bernstein-Szeg\"o polynomials degenerate to
\begin{equation*}
p_l\bigl(\cos (\xi )\bigr)=
\begin{cases}
2^{1-\delta_l/2}\cos(l\xi) &\text{if}\  (\epsilon_+,\epsilon_-)=(0,0) ,\\
\frac{ \epsilon_+\cos((l+\frac12)\xi)\sin(\frac{\xi}2) +\epsilon_- \sin((l+\frac12)\xi)\cos(\frac{\xi}2) }
{(\epsilon_+ +\epsilon_-)\sin(\frac{\xi}2)\cos(\frac{\xi}2)}&\text{if}\ (\epsilon_+,\epsilon_-)\neq (0,0) 
\end{cases}
\end{equation*}
(where $\delta_l:=1$ if $l=0$ and $\delta_l:=0$ otherwise). These are, respectively, the Chebyshev polynomials
of the first kind $ (\epsilon_+,\epsilon_-)=(0,0)$, of the second kind  $ (\epsilon_+,\epsilon_-)=(1,1)$ (so $p_l\bigl(\cos(\xi)\bigr)=\frac{\sin((l+1)\xi)}{\sin(\xi)}$),  of the third kind $ (\epsilon_+,\epsilon_-)=(0,1)$
(so $p_l\bigl(\cos(\xi)\bigr)=\frac{\sin((l+\frac12)\xi)}{\sin(\frac12 \xi)}$), and of the fourth kind
$ (\epsilon_+,\epsilon_-)=(1,0)$ (so $p_l\bigl(\cos(\xi)\bigr)=\frac{\cos((l+\frac12)\xi)}{\cos(\frac12 \xi)}$) (cf. e.g. \cite[Chapter 18]{olv-loz-boi-cla:nist}).
\end{remark}

\begin{remark} The formula in Eqs. \eqref{bs-explicit}--\eqref{bs-weights} is read-off from the following elementary asymptotics  in the complex plane
for ${l\geq d_\epsilon}$:
\begin{subequations}
\begin{equation}\label{asymptotics}
c(\xi)e^{il \xi} + c(-\xi)e^{-il\xi} =    \Delta_l^{-1} e^{il\xi}    +o (e^{il \xi})    \quad \text{as}\ |e^{i\xi}| \to +\infty ,
\end{equation}
in combination with the relatively straightforward  integration formula for $0\leq k\leq l$ (cf. the end of this remark below for some additional indications concerning the evaluation of this integral):
\begin{equation}\label{integral}
\frac{1}{2\pi} \int_{-\pi}^\pi \frac{e^{il \xi}}{c(-\xi)} c_k(\xi) \text{d}\xi =
\begin{cases}
1&\text{if}\ k =l , \\
0&\text{if}\ k <l,
\end{cases}
\end{equation}
\end{subequations}
where $c_k(\xi) := 2^{1-\delta_k} \cos (k\xi)$.
Indeed, since the ({possible}) singularities at $e^{i\xi }=\pm 1$ (stemming from ${c}(\xi)$, {if $\epsilon_++\epsilon_->0$}) are removable in the even expression on the LHS of \eqref{asymptotics}, 
it is clear from the asymptotics that we are dealing with a polynomial of degree $l(\geq d_\epsilon)$ in $\cos (\xi)$.
Moreover, it follows from
Eq. \eqref{integral} that
\begin{equation}
 \int_0^\pi   \bigl( {c}(\xi )e^{il\xi}+\text{c}(-\xi)e^{-il\xi}\bigr) c_k(\xi) {w}(\xi ) \text{d} \xi
=
\begin{cases}
1&\text{if}\ k =l , \\
0&\text{if}\ k <l 
\end{cases}
\end{equation}
(where the overall numerical factor is absorbed in the weight function $w(\xi)$).
The upshot is that for $l\geq {d_\epsilon}$ the RHS of \eqref{bs-explicit} satisfies the defining
orthogonality relations for $p_l\bigl(\cos (\xi)\bigr)$. Notice that this also reveals that  the (leading) coefficient $\alpha_l$ of $\bigl( \cos(\xi)\bigr)^l$ in $p_l\bigl(\cos (\xi)\bigr)$ is given by
\begin{equation}\label{lead-coef}
\alpha_l=2^l\Delta_l^{-1/2}
\end{equation}
in this situation. Finally, to infer the identity in Eq. \eqref{integral} it suffices to observe that the integral under consideration picks up the constant term of the (Fourier) expansion  in $e^{i\xi}$ of the integrand. Indeed, after expanding the $d$ factors stemming from the denominator of
$1/c(-\xi)$ in terms of  geometric series, it readily follows that the constant term in question is equal to $0$ if $l>k\geq 0$ and equal to $1$ if $l=k\geq 0$.
\end{remark}

\section{On the roots of Bernstein-Szeg\"o polynomials}\label{sec7}

\begin{subequations}
For $m+1\geq {d_\epsilon}$, the explicit representation in Eqs. \eqref{bs-explicit}--\eqref{bs-weights}
permits to compute the
 $(\hat{l}+1)$th root  $\xi_{\hat{l}}^{(m+1)}$ of the Bernstein-Szeg\"o polynomial
 \begin{equation}\label{bs-factorization}
p_{m+1}\bigl(\cos (\xi)\bigr) = \alpha_{m+1} \prod_{0\leq \hat{l}\leq m}   \left(  \cos (\xi)-\cos (\xi_{\hat{l}}^{(m+1)}) \right) ,
\end{equation}
with the convention
 \begin{equation}\label{bs-roots}
0< \xi_0^{(m+1)}< \xi_1^{(m+1)}<\cdots <  \xi_{m}^{(m+1)}< \pi,
\end{equation}
\end{subequations}
 as the unique real solution of an elementary transcendental equation.

\begin{proposition}[Bernstein-Szeg\"o  Roots]\label{bs-roots:prp}
Given $m+1\geq {d_\epsilon}$ and $\hat{l}\in \{ 0,\ldots ,m\}$, the root  $\xi_{\hat{l}}^{(m+1)}$ \eqref{bs-roots} of the Bernstein-Szeg\"o polynomial
 $p_{m+1}\bigl(\cos (\xi)\bigr)$ can be retrieved as the unique real solution of the transcendental equation
\begin{subequations}
\begin{equation}\label{bethe-eq}
{2(m+1-d_\epsilon)}\xi + \sum_{1\leq r\leq d} v_{a_r}(\xi)   =
 {\pi \left( 2\hat{l}+{1+\epsilon_-} \right)}
\end{equation}
where
\begin{equation}\label{va}
v_a(\xi):=  \int_0^\xi    \frac{1-a^2}{1+2a\cos (\theta )+a^2}\text{d} \theta \qquad (|a|< 1) .
\end{equation}
\end{subequations}
\end{proposition}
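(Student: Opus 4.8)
The plan is to insert the explicit representation \eqref{bs-explicit}--\eqref{bs-weights} into the equation $p_{m+1}\bigl(\cos(\xi)\bigr)=0$ and to recast it as an equation for the phase of $c(\xi)$. Since the parameters $a_r$ occur in complex conjugate pairs one has $c(-\xi)=\overline{c(\xi)}$ for real $\xi$, and $\Delta_{m+1}$ is a positive real number (it equals $1$, or $(1+(-1)^{\epsilon_-}\prod_{r}a_r)^{-1}$ with $|\prod_{r}a_r|<1$); hence \eqref{bs-explicit} becomes $p_{m+1}\bigl(\cos(\xi)\bigr)=2\Delta_{m+1}^{1/2}\,\mathrm{Re}\bigl(c(\xi)e^{i(m+1)\xi}\bigr)$. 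On $(0,\pi)$ the function $c(\xi)$ is analytic and nonvanishing (as $w(\xi)=1/(2\pi|c(\xi)|^{2})$ is positive and bounded there), so for $\xi\in(0,\pi)$ the root condition is equivalent to $(m+1)\xi+\arg c(\xi)\in\tfrac{\pi}{2}+\pi\mathbb{Z}$, where $\arg c(\xi)$ denotes the branch that is continuous on $(0,\pi)$.

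Next I would evaluate $\arg c(\xi)$ explicitly, using $\tfrac{\text{d}}{\text{d}\xi}\arg c(\xi)=\mathrm{Im}\,\tfrac{\text{d}}{\text{d}\xi}\log c(\xi)$ with $\log c(\xi)=-\log(1+\epsilon_+e^{-i\xi})-\log(1-\epsilon_-e^{-i\xi})+\sum_{r}\log(1+a_re^{-i\xi})$ from \eqref{c-f}. Upon taking imaginary parts the first two summands contribute $\epsilon_+/2$ and $\epsilon_-/2$, while for the product over $r$ I would pair $a_r$ with $\bar a_r$ and invoke the identities $\tfrac{1}{1+ae^{-i\xi}}+\tfrac{1}{1+ae^{i\xi}}=\tfrac{2(1+a\cos(\xi))}{1+2a\cos(\xi)+a^{2}}$ and $\tfrac{a(a+\cos(\xi))}{1+2a\cos(\xi)+a^{2}}=\tfrac12-\tfrac12 v_a'(\xi)$, with $v_a'(\xi)=\tfrac{1-a^{2}}{1+2a\cos(\xi)+a^{2}}$ the derivative of \eqref{va}; the outcome is
\begin{equation*}
\frac{\text{d}}{\text{d}\xi}\arg c(\xi)=-d_\epsilon+\frac{1}{2}\sum_{1\le r\le d} v_{a_r}'(\xi) .
\end{equation*}
Reading off the boundary value $\arg c(0^{+})=-\epsilon_-\pi/2$ (nontrivial only through $(1-e^{-i\xi})^{-1}\sim -i/\xi$ when $\epsilon_-=1$) and integrating gives $\arg c(\xi)=-\epsilon_-\tfrac{\pi}{2}-d_\epsilon\,\xi+\tfrac12\sum_{r}v_{a_r}(\xi)$. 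Substituting this into the condition of the first step and multiplying through by $2$ yields exactly \eqref{bethe-eq}, with the free integer playing the role of $\hat l$.

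It then remains to settle solvability and the indexing. Writing $F(\xi):=2(m+1-d_\epsilon)\xi+\sum_{r}v_{a_r}(\xi)$, the roots of $p_{m+1}\bigl(\cos(\xi)\bigr)$ in $(0,\pi)$ are precisely the points with $F(\xi)\in\pi(1+\epsilon_-)+2\pi\mathbb{Z}$. I would show $F$ is strictly increasing: a real $a_r$ contributes $v_{a_r}'(\xi)>0$ outright, while a conjugate pair $a=re^{i\phi},\bar a$ (with $0<r<1$) contributes
\begin{equation*}
v_a'(\xi)+v_{\bar a}'(\xi)=-2+2\,\frac{1+r\cos(\phi-\xi)}{1+2r\cos(\phi-\xi)+r^{2}}+2\,\frac{1+r\cos(\phi+\xi)}{1+2r\cos(\phi+\xi)+r^{2}}>0 ,
\end{equation*}
because $\tfrac{1+r\cos(\psi)}{1+2r\cos(\psi)+r^{2}}>\tfrac12$ for $0<r<1$; together with $m+1\ge d_\epsilon$ this makes $F'>0$. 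Moreover $F(0)=0$ and $F(\pi)=\pi\bigl(2(m+1-d_\epsilon)+d\bigr)=\pi(2m+2+\epsilon_++\epsilon_-)$, using the elementary identity $v_a(\pi)=\int_0^\pi\tfrac{1-a^{2}}{1+2a\cos(\theta)+a^{2}}\,\text{d}\theta=\pi$ for every $|a|<1$ (e.g.\ by expanding the integrand in geometric series). Hence $F$ is a strictly increasing bijection of $(0,\pi)$ onto $\bigl(0,\pi(2m+2+\epsilon_++\epsilon_-)\bigr)$, the values $\pi(2\hat l+1+\epsilon_-)$ inside this interval are exactly those for $\hat l\in\{0,\dots,m\}$, and each is attained once; this exhibits the $m+1$ roots $\xi_0^{(m+1)}<\cdots<\xi_m^{(m+1)}$ of \eqref{bs-roots} and identifies $\xi_{\hat l}^{(m+1)}$ as the unique real solution of \eqref{bethe-eq}.

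The main obstacle is the bookkeeping in the middle steps: pinning down the $\epsilon_\pm$-dependent boundary phase $\arg c(0^{+})$ and the normalising constant $d_\epsilon$ correctly, and proving strict monotonicity of $F$ — since $v_a'(\xi)$ ceases to be positive term by term once $a$ is genuinely complex, this really forces one to group each $a_r$ with $\bar a_r$ before estimating. Everything else is a routine consequence of \eqref{bs-explicit}--\eqref{c-f}.
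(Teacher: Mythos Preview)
Your proposal is correct and follows essentially the same route as the paper: both arguments identify the roots of $p_{m+1}(\cos\xi)$ with the level sets of the strictly increasing function $F(\xi)=2(m+1-d_\epsilon)\xi+\sum_r v_{a_r}(\xi)$, and both use $F(0)=0$, $F(\pi)=\pi(2m+2+\epsilon_++\epsilon_-)$ together with monotonicity (via pairing $a_r$ with $\bar a_r$) to pin down the indexing $\hat l\in\{0,\dots,m\}$.

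The only notable difference is in how the equivalence between the root condition and \eqref{bethe-eq} is established. The paper works in the direction \eqref{bethe-eq} $\Rightarrow$ root, exponentiating the transcendental equation via the closed-form identity $e^{-iv_a(\xi)}=\frac{1+ae^{i\xi}}{e^{i\xi}+a}$ to land directly on $e^{2i(m+1)\xi}=-c(-\xi)/c(\xi)$. You instead work in the reverse direction, computing $\arg c(\xi)$ by differentiating $\log c(\xi)$, taking imaginary parts, and integrating back from the boundary value $\arg c(0^+)=-\epsilon_-\pi/2$. Your route is a bit more laborious (it rederives the exponential identity in differentiated form), but it has the minor virtue of making the constant $d_\epsilon$ and the $\epsilon_-$-shift on the right-hand side of \eqref{bethe-eq} emerge transparently from the structure of $c(\xi)$ rather than being verified after the fact. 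Either way the content is the same.
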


\begin{proof}
Since for $|a|<1$ and $\xi$ real
$v_a^\prime(\xi)+v_{\bar{a}}^\prime (\xi) >0$, it is clear that
the (odd) real function of $\xi$ on
the LHS of Eq. \eqref{bethe-eq} is monotonously increasing  and unbounded (as $v_a(\xi+2\pi)=v_a(\xi)+2\pi$). The transcendental equation in question has therefore
a unique real solution $\hat{\xi}_{\hat{l}}^{(m+1)}$ (say).
Moreover, from the RHS  (and the monotonicity of the LHS) we see that $\hat{\xi}_{\hat{k}}^{(m+1)}> \hat{\xi}_{\hat{l}}^{(m+1)}$ if $\hat{k}>\hat{l}$. At $\xi=0$ and $\xi=\pi$ the LHS of Eq. \eqref{bethe-eq}  takes the values $0$ and ${(2m+2+\epsilon_+ +\epsilon_-)\pi}$, respectively (because $v_a(\pi)=\pi$),
so it is clear (by comparing with the values on the RHS) that $0 <\hat{\xi}_0^{(m+1)}< \hat{\xi}_1^{(m+1)}<\cdots < \hat{\xi}_m^{(m+1)}<\pi$.
It remains to infer that at $\xi=\hat{\xi}_{\hat{l}}^{(m+1)}$ ($0\leq \hat{l}\leq m$) our Bernstein-Szeg\"o polynomial
$p_{m+1}\bigl( \cos (\xi)\bigr)$ vanishes, or equivalently (when $m+1\geq {d_\epsilon}$) that
$e^{2i(m+1)\xi}=-\frac{{c}(-\xi)}{{c}(\xi)}$, or more explicitly:
\begin{equation}\label{bethe}
e^{{2i(m+1-d_\epsilon)}\xi} ={(-1)^{\epsilon_-+1}}\prod_{1\leq r\leq d} \frac{1+a_r e^{i\xi}}{e^{i\xi}+a_r} .
\end{equation}
Multiplication of Eq. \eqref{bethe-eq} by $i$ and exponentiation of both sides with the aid of the identity (cf. Eq. \eqref{va:elementary} below)
\begin{equation*}
e^{-iv_a(\xi)}=  \frac{1+a e^{i\xi}}{e^{i\xi}+a}  \qquad (|a|<1),
\end{equation*}
reveals that Eq. \eqref{bethe} is automatically satisfied at solutions of Eq. \eqref{bethe-eq}, i.e. $p_{m+1}(\hat{\xi}_{\hat{l}}^{(m+1)})=0$ and thus $\hat{\xi}_{\hat{l}}^{(m+1)}=\xi_{\hat{l}}^{(m+1)}$ (for $\hat{l}=0,\ldots ,m$).
\end{proof}

 Proposition \ref{bs-roots:prp} entails the following estimates for the Bernstein-Szeg\"o roots and their distances.

\begin{proposition}[Estimates for the Bernstein-Szeg\"o  Roots]\label{bs-bounds:prp}
For $m+1\geq {d_\epsilon}$, the Bernstein-Szeg\"o roots \eqref{bs-roots} obey the following inequalities:
\begin{subequations}
\begin{equation}\label{bound:a}
\frac{\pi \bigl(\hat{l}+{\frac12+\frac{\epsilon_-}{2}\bigr)}}{m+1-{d_\epsilon}+\kappa_-}   \leq \xi_{\hat{l}}^{(m+1)}  \leq \frac{\pi \bigl(\hat{l}+ {\frac12+\frac{\epsilon_-}{2}\bigr)}}{m+1-{d_\epsilon}+\kappa_+} 
\end{equation}
(for $0\leq \hat{l}\leq m$), and
\begin{equation}\label{bound:b}
\frac{\pi (\hat{k}-\hat{l})}{m+1-{d_\epsilon}+\kappa_-}   \leq \xi_{\hat{k}}^{(m+1)} - \xi_{\hat{l}}^{(m+1)} \leq \frac{\pi (\hat{k}-\hat{l})}{m+1-{d_\epsilon}+\kappa_+} 
\end{equation}
(for $0\leq \hat{l}<\hat{k}\leq m$), where
\begin{equation}
\kappa_\pm := \frac{1}{2}\sum_{1\leq r\leq d}   \left(\frac{1- |a_r|}{1 + |a_r|}\right)^{\pm 1} .
\end{equation}
\end{subequations}
\end{proposition}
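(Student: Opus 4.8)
The plan is to reduce both pairs of inequalities to a single uniform pointwise bound on the real-valued function $\sum_{1\le r\le d}v_{a_r}^{\prime}(\xi)$, namely
\begin{equation*}
2\kappa_+\le\sum_{1\le r\le d}v_{a_r}^{\prime}(\xi)\le 2\kappa_-\qquad(\xi\in\mathbb{R}),
\end{equation*}
and then to integrate this and insert it into the transcendental equation \eqref{bethe-eq} of Proposition \ref{bs-roots:prp}. To obtain the pointwise bound I would differentiate the identity $e^{-iv_a(\xi)}=(1+ae^{i\xi})/(e^{i\xi}+a)$ (used already in the proof of Proposition \ref{bs-roots:prp}), which after an elementary partial-fraction rearrangement gives $v_a^{\prime}(\xi)=\frac{1}{1+ae^{i\xi}}+\frac{1}{1+ae^{-i\xi}}-1$. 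Because the parameters $a_1,\ldots,a_d$ are invariant under complex conjugation, the sum $\sum_r v_{a_r}^{\prime}(\xi)$ equals $\sum_r\bigl(2\,\mathrm{Re}\,\tfrac{1}{1+a_re^{i\xi}}-1\bigr)$; writing $a_re^{i\xi}=|a_r|e^{i\phi}$ one has $\mathrm{Re}\,\tfrac{1}{1+a_re^{i\xi}}=\frac{1+|a_r|\cos\phi}{1+2|a_r|\cos\phi+|a_r|^2}$, which is monotone in $\cos\phi$ and is therefore trapped between its values $\frac{1}{1+|a_r|}$ and $\frac{1}{1-|a_r|}$ at $\cos\phi=\pm1$. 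Hence $\frac{1-|a_r|}{1+|a_r|}\le 2\,\mathrm{Re}\,\tfrac{1}{1+a_re^{i\xi}}-1\le\frac{1+|a_r|}{1-|a_r|}$ for each $r$, and summing over $r$ yields the displayed bound (which incidentally re-confirms the positivity $\sum_r v_{a_r}^{\prime}(\xi)>0$ invoked in the proof of Proposition \ref{bs-roots:prp}).

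Granting this, I would derive \eqref{bound:a} as follows: since $v_{a_r}(0)=0$, integrating the pointwise bound over $[0,\xi_{\hat l}^{(m+1)}]$ gives $2\kappa_+\,\xi_{\hat l}^{(m+1)}\le\sum_r v_{a_r}(\xi_{\hat l}^{(m+1)})\le 2\kappa_-\,\xi_{\hat l}^{(m+1)}$; substituting this into \eqref{bethe-eq} evaluated at $\xi=\xi_{\hat l}^{(m+1)}$ and dividing by the positive quantities $2(m+1-d_\epsilon+\kappa_\pm)$ produces \eqref{bound:a}, using $\hat l+\tfrac12+\tfrac{\epsilon_-}{2}=\tfrac12(2\hat l+1+\epsilon_-)$. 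For \eqref{bound:b} I would subtract \eqref{bethe-eq} at index $\hat l$ from the same equation at index $\hat k$, producing
\begin{equation*}
2(m+1-d_\epsilon)\bigl(\xi_{\hat k}^{(m+1)}-\xi_{\hat l}^{(m+1)}\bigr)+\int_{\xi_{\hat l}^{(m+1)}}^{\xi_{\hat k}^{(m+1)}}\sum_{1\le r\le d}v_{a_r}^{\prime}(\theta)\,\text{d}\theta=2\pi(\hat k-\hat l),
\end{equation*}
and then bound the integral by the pointwise inequalities (here $\xi_{\hat k}^{(m+1)}>\xi_{\hat l}^{(m+1)}$ by \eqref{bs-roots}) and again divide by $2(m+1-d_\epsilon+\kappa_\pm)$.

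The crux is the first step, and within it specifically the case of \emph{complex} $a_r$: for real $a_r$ the denominator $1+2a_r\cos\xi+a_r^2$ plainly lies in $[(1-|a_r|)^2,(1+|a_r|)^2]$, so the bound on $v_{a_r}^{\prime}(\xi)$ drops out at once, but for complex $a_r$ the individual summand $v_{a_r}^{\prime}(\xi)$ is not real and this naive estimate is unavailable; one genuinely needs the conjugate pairing of the $a_r$ to pass to $\mathrm{Re}\,\tfrac{1}{1+a_re^{i\xi}}$ before estimating. The remaining manipulations — integration of the pointwise bound and rearrangement of \eqref{bethe-eq} — are entirely routine.
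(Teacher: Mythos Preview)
Your proof is correct and follows essentially the same route as the paper: bound $\sum_r v_{a_r}'(\xi)$ pointwise between $2\kappa_+$ and $2\kappa_-$, integrate (the paper phrases this step as the mean value theorem), and substitute into \eqref{bethe-eq}, subtracting two instances of the equation for \eqref{bound:b}. The only cosmetic variation is in the handling of complex $a_r$: the paper bounds $\mathrm{Re}\bigl(v_a'(\xi)\bigr)$ termwise via the identity $\mathrm{Re}\bigl(v_a'(\xi)\bigr)=\tfrac12\bigl(v_{|a|}'(\xi+\mathrm{Arg}\,a)+v_{|a|}'(\xi-\mathrm{Arg}\,a)\bigr)$, whereas you first use the conjugate pairing of the $a_r$ to rewrite the whole sum and then bound the resulting summands---both arrive at the same inequality $\tfrac{1-|a|}{1+|a|}\le\,\cdot\,\le\tfrac{1+|a|}{1-|a|}$.
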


\begin{proof}
The estimate in Eq. \eqref{bound:a} readily follows from the transcendental equation for $\xi_{\hat{l}}^{(m+1)}$ in Eq. \eqref{bethe-eq} through the mean value theorem. Here one uses that for $\xi$ real
\begin{equation*}
 \text{Re} \bigl(v_a^\prime (\xi)\bigr) =  \frac{1}{2}  \Bigl(    v_{|a|}^\prime \bigl(\xi+\text{Arg}(a)\bigr)+ v_{|a|}^\prime \bigl(\xi-\text{Arg}(a)\bigr) \Bigr) ,
\end{equation*}
whence
\begin{equation*}
\frac{1-|a|}{1 + |a|} \leq \text{Re} \bigl(v_a^\prime (\xi)\bigr) \leq    \frac{1+|a|}{1 - |a|}  \qquad (|a|<1).
\end{equation*}
The estimate in Eq. \eqref{bound:b}  for the distance between the zeros follows in an analogous way, after subtracting the 
$\hat{l}$th equation in Eq. \eqref{bethe-eq} from the $\hat{k}$th equation.
\end{proof}

\begin{remark}\label{numerics:rem}
The
transcendental equation in Proposition \ref{bs-roots:prp} is well-suited for computing $\xi_{\hat{l}}^{(m+1)}$ ($m+1\geq {d_\epsilon}$)
numerically (e.g. by means of a standard  fixed-point iteration scheme like Newton's method).
 Notice in this connection  that
 for $-\pi< \xi<\pi$ (and $|a|<1$):
\begin{equation}\label{va:elementary}
v_a(\xi)=i \text{Log}
\biggl( \frac{1+ ae^{i\xi}}{e^{i\xi} +a }  \biggr) =  2\text{Arctan} \Biggl(    \frac{1-a}{1+a}\tan \left( \frac{\xi}{2} \right)   \Biggr),
\end{equation}
so numerical integration can be readily avoided when evaluating $v_a(\xi)$ \eqref{va}. A natural initial estimate for starting up the numerical computation of $\xi_{\hat{l}}^{(m+1)}$ is provided by
the exact $(\hat{l}+1)$th Chebyshev root  
$
{
\frac
{\pi \bigl(\hat{l}+\frac{1}{2}+\frac{\epsilon_-}{2}  \bigr)}
{m+1+\frac{\epsilon_+}2+\frac{\epsilon_-}2}
}
$
(which corresponds to the case $d=0$, cf. Remark \ref{chebyshev:rem}). Indeed, these Chebyshev roots automatically comply with all inequalities in Proposition \ref{bs-bounds:prp}.
\end{remark}

\section{Gauss-Chebyshev quadrature for rational functions with prescribed poles}\label{sec8}
For $\epsilon_\pm=0$ compact expressions for the Christoffel weights associated with the Bernstein-Szeg\"o polynomials were computed in \cite[Theorem 4.4]{dar-gon-jim:quadrature},
while for  general $\epsilon_\pm\in \{ 0,1\}$ the corresponding formulas can be gleaned from
 \cite[Theorems 5.3--5.5]{bul-cru-dec-gon:rational}:
 \begin{align}
{{w}}^{(m+1)}_{\hat{l}} := & \Big( \sum_{0\leq l\leq m} p^2_l\bigl( \cos(\xi_{\hat{l}}^{(m+1)})\bigr) \Bigr)^{-1} \nonumber \\
=& \Bigl(   |c (\xi_{\hat{l}}^{(m+1)}) |^2  h^{(m+1)} (\xi_{\hat{l}}^{(m+1)}) \Bigr)^{-1} \label{christoffel-weights} \\
&\qquad\text{with}\quad  {h}^{(m+1)}(\xi ):= 
2(m +1-{d_\epsilon})+\sum_{1\leq r\leq d} v^\prime_{a_r}(\xi ) \nonumber
\end{align}
$ (\hat{l}=0,\ldots ,m)$, where it was assumed that $m+1\geq d_\epsilon$.   To keep our presentation self-contained, a short verification of Eq. \eqref{christoffel-weights} is provided in Appendix \ref{appA} below.

The Gauss quadrature \eqref{gqr:a}, \eqref{gqr:b} now gives rise to the following exact quadrature rule for the integration of rational functions with prescribed poles
against the Chebyshev  weight function (cf. \cite[Section 4]{dar-gon-jim:quadrature} and \cite[Section 5]{bul-cru-dec-gon:rational}):
\begin{subequations}
\begin{align}\label{gc-rule:a}
\frac{1}{2\pi} \int_0^\pi  R (\xi)  \rho  (\xi) \text{d}\xi =
 \sum_{0\leq \hat{l}\leq m}   R\bigl( \xi _{\hat{l}}^{(m+1)}\bigr)  \rho \bigl(\xi _{\hat{l}}^{(m+1)}\bigr) \Bigl( h^{(m+1)} \bigl(\xi_{\hat{l}}^{(m+1)}\bigr)\Bigr)^{-1}
.
\end{align}
Here $\rho (\cdot )$ refers to the Chebyshev weight function
\begin{equation}\label{gc-rule:b}
\rho (\xi):=2^{\epsilon_++\epsilon_-}(1+\epsilon_+\cos(\xi))   (1-\epsilon_-\cos(\xi))
\end{equation}
and $R(\cdot)$ is of the form
\begin{equation}\label{gc-rule:c}
R(\xi )  =   \frac{f\bigl(\cos (\xi)\bigr)}{\prod_{1\leq r\leq d} \bigl(1+2a_r\cos (\xi )+a_r^2\bigr)}   
\end{equation}
\end{subequations}
with $d\leq 2(m+1)+\epsilon_++\epsilon_-$, where $f(\cos(\xi))$ denotes an arbitrary polynomial  of degree at most $2m+1$ in $\cos (\xi)$. For $d=0$, the quadrature rule in Eqs. \eqref{gc-rule:a}--\eqref{gc-rule:c} reproduces the standard Gauss-Chebyshev quadratures (cf. Remark \ref{chebyshev:rem}).

\begin{remark}\label{bs-orthogonality:rem}
Assuming $m+1\geq d_\epsilon$,
the underlying discrete orthogonality relations for the Bernstein-Szeg\"o polynomials (cf. Eqs.  \eqref{finite-orthogonality} and \eqref{finite-orthogonality:dual}) become explicitly
\begin{subequations}
\begin{align}\label{bs:orthogonality}
  \sum_{0\leq \hat{l}\leq m}    p_l\bigl( \cos(\xi_{\hat{l}}^{(m+1)})\bigr)  p_{k} \bigl( \cos(\xi_{\hat{l}}^{(m+1)})\bigr) & \Bigl(  |c (\xi_{\hat{l}}^{(m+1)}) |^2  h^{(m+1)} (\xi_{\hat{l}}^{(m+1)}) \Bigr)^{-1}
  \\
 &=
\begin{cases}
1&\text{if}\ k =l, \\
0&\text{if}\ k \neq l
\end{cases} \nonumber
\end{align}
($l,k \in \{ 0,\ldots ,m\}$) and
\begin{align}\label{bsd:orthogonality}
  \sum_{0\leq l\leq m}    p_l\bigl( \cos(\xi_{\hat{l}}^{(m+1)})\bigr) p_{l} & \bigl( \cos (\xi_{\hat{k}}^{(m+1)}) \bigr) \\
  &=
\begin{cases}
  |c (\xi_{\hat{l}}^{(m+1)}) |^2  h^{(m+1)} (\xi_{\hat{l}}^{(m+1)})  &\text{if}\ \hat{k} =\hat{l}, \\
0&\text{if}\ \hat{k} \neq \hat{l}
\end{cases} \nonumber
\end{align}
\end{subequations}
($\hat{l},\hat{k} \in \{ 0,\ldots ,m\}$), respectively.
\end{remark}

\section{Gauss-Chebyshev cubature for symmetric rational functions with prescribed poles at coordinate hyperplanes}\label{sec9}
The specialization of Proposition \ref{gauss-cubature:prp} to the case of the Bernstein-Szeg\"o polynomials now immediately  culminates in the principal result of this paper: 
an explicit cubature rule for the integration 
of symmetric functions---with prescribed poles at coordinate hyperplanes---against the distributions of the unitary circular Jacobi ensembles.
The cubature in question
generalizes the quadrature in Eqs. \eqref{gc-rule:a}--\eqref{gc-rule:c} to the situation of an arbitrary number of variables $n\geq 1$.

\begin{theorem}[Gauss-Chebyshev Cubature Rule for Symmetric Functions]\label{gauss-chebyshev-cubature:thm}
Let $\epsilon_\pm\in \{ 0, 1\}$ and $|a_r|<1$ ($r=1,\ldots ,d$) with (possible) complex parameters $a_r$ arising in complex conjugate pairs.
Then assuming
\begin{equation*}
d\leq 2(m+n)+\epsilon_+ +\epsilon_-, 
\end{equation*}
one has that 
\begin{subequations}
\begin{align}\label{gauss-chebyshev-cubature:a}
\frac{1}{(2\pi )^n\, n!}&\int_0^\pi\cdots \int_0^\pi  R ( \boldsymbol{\xi})  \rho  (\boldsymbol{\xi}) \text{d}\xi_1\cdots \text{d}\xi_n =\\
& \sum_{\hat{\lambda}\in\Lambda^{(m,n)}}   R\bigl( \boldsymbol{ \xi} _{\hat{\lambda}}^{(m,n)} \bigr)  \rho \bigl(\boldsymbol{ \xi} _{\hat{\lambda}}^{(m,n)} \bigr) \Bigl( H^{(m,n)} \bigl(\boldsymbol{ \xi} _{\hat{\lambda}}^{(m,n)}\bigr)\Bigr)^{-1} 
.\nonumber
\end{align}
Here the nodes $ \boldsymbol{ \xi} _{\hat{\lambda}}^{(m,n)}$ are of the form in Eq. \eqref{X-nodes} with $\xi^{(m+1)}_{\hat{l}}$ as in Eqs. \eqref{bs-factorization}, \eqref{bs-roots} (cf. also Propositions \ref{bs-roots:prp}, \ref{bs-bounds:prp}),  the weight function $\rho  (\cdot )$ refers to the unitary circular Jacobi distribution
\begin{align}\label{rho-jacobi}
\rho  (\boldsymbol{\xi}):=\prod_{1\leq j\leq n} 2^{\epsilon_++\epsilon_-} & \bigl(1+\epsilon_+\cos(\xi_j)\bigr)   \bigl(1-\epsilon_-\cos(\xi_j)\bigr) \\
&\times \prod_{1\leq j<k\leq n}  \bigl(\cos (\xi_j)-\cos(\xi_k)\bigr)^2 ,\nonumber
\end{align}
the Christoffel weights are governed by
\begin{equation}
H^{(m,n)}(\boldsymbol{\xi}) := \prod_{1\leq j\leq n}   h^{(m+n)}(\xi_j)
\end{equation}
with $h^{(m+1)}(\cdot )$ taken from Eq. \eqref{christoffel-weights}, and $R(\cdot)$ is of the form
\begin{equation}
R(\boldsymbol{\xi})  =   \frac{f\bigl(\cos (\xi_1),\ldots ,\cos(\xi_n)\bigr)}{\prod_{\substack{1\leq r\leq d\\ 1\leq j\leq n}} \bigl(1+2a_r\cos (\xi_j )+a_r^2\bigr)}    ,
\end{equation}
\end{subequations}
 where $f(x_1,\ldots,x_n)=f(\mathbf{x})$ denotes an arbitrary symmetric polynomial in $\mathbb{P}^{(2m+1,n)}$.
\end{theorem}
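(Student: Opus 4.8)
The plan is to obtain the statement as the Bernstein-Szeg\"o specialization of the Gaussian cubature rule of Proposition~\ref{gauss-cubature:prp}, followed by the substitution $x_j=\cos(\xi_j)$. To set this up, I would first note that the Bernstein-Szeg\"o system $p_0,p_1,p_2,\dots$ of Section~\ref{sec6} is precisely the orthonormal family attached, in the sense of Section~\ref{sec2}, to the interval $(a,b)=(-1,1)$ and the weight $\tilde{\text w}(x):=\text w(\arccos x)/\sqrt{1-x^2}$: the relation $\int_0^\pi p_l(\cos\xi)p_k(\cos\xi)\,\text w(\xi)\,\text{d}\xi=\delta_{l,k}$ becomes $\int_{-1}^1 p_l(x)p_k(x)\tilde{\text w}(x)\,\text{d}x=\delta_{l,k}$ after $x=\cos\xi$, and $\tilde{\text w}$ is continuous and positive on $(-1,1)$ with finite moments (the factors $1\pm\cos\xi$ only affect the endpoints, where $1/\sqrt{1-x^2}$ produces at worst an integrable singularity). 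Hence Proposition~\ref{gauss-cubature:prp} applies with $\text w\rightsquigarrow\tilde{\text w}$, giving, for $f(\mathbf{x})\in\mathbb{P}^{(2m+1,n)}$, the identity
\[
\frac{1}{n!}\int_{-1}^1\!\!\cdots\!\int_{-1}^1 f(\mathbf{x})\,\text{V}(\mathbf{x})^2\!\!\prod_{1\le j\le n}\!\!\tilde{\text w}(x_j)\,\text{d}x_1\cdots\text{d}x_n=\sum_{\hat{\lambda}\in\Lambda^{(m,n)}} f\bigl(\mathbf{x}^{(m,n)}_{\hat{\lambda}}\bigr)\,\text{W}_{\hat{\lambda}},
\]
with nodes and weights built from the roots $x^{(m+n)}_{\hat l}=\cos(\xi^{(m+n)}_{\hat l})$ of $p_{m+n}$ and its one-dimensional Christoffel weights $\text w^{(m+n)}_{\hat l}$.

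Next I would transfer this identity to the $\xi$-variables. On the left, $\text{d}x_j=\sin(\xi_j)\,\text{d}\xi_j$ (the orientation reversal being absorbed, as $\cos$ is decreasing on $(0,\pi)$) cancels the factor $1/\sqrt{1-x_j^2}=1/\sin(\xi_j)$ inside $\tilde{\text w}$, so that $\prod_j\tilde{\text w}(x_j)\,\text{d}x_j=\prod_j\text w(\xi_j)\,\text{d}\xi_j$; writing $\text w(\xi)=\rho(\xi)\big/\bigl(2\pi\prod_r(1+2a_r\cos\xi+a_r^2)\bigr)$ with $\rho(\xi)$ as in \eqref{gc-rule:b} and using \eqref{rho-jacobi}, the left-hand side turns into exactly $\tfrac{1}{(2\pi)^n n!}\int_{[0,\pi]^n}R(\boldsymbol\xi)\rho(\boldsymbol\xi)\,\text{d}\xi_1\cdots\text{d}\xi_n$.

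For the right-hand side, the node $\mathbf{x}^{(m,n)}_{\hat\lambda}$ of \eqref{X-nodes} becomes the node $\boldsymbol\xi^{(m,n)}_{\hat\lambda}$ of the asserted shape (its components being the Bernstein-Szeg\"o roots of Proposition~\ref{bs-roots:prp}), while for $\text W_{\hat\lambda}$ I would invoke the closed form \eqref{christoffel-weights} at index $m+n$ in place of $m+1$ — legitimate because the standing hypothesis $d\le 2(m+n)+\epsilon_++\epsilon_-$ is exactly the condition $m+n\ge d_\epsilon$ — together with the identity $|c(\xi)|^{2}=1/(2\pi\text w(\xi))$ recorded in Section~\ref{sec6}. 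This yields $\text w^{(m+n)}_{\hat l}=\rho(\xi^{(m+n)}_{\hat l})\big/\bigl(\prod_r(1+2a_r\cos\xi^{(m+n)}_{\hat l}+a_r^2)\, h^{(m+n)}(\xi^{(m+n)}_{\hat l})\bigr)$, and substituting into \eqref{W-discrete} and regrouping the Vandermonde factor, the $\rho$-factors, and the $h^{(m+n)}$-factors converts $f(\mathbf{x}^{(m,n)}_{\hat\lambda})\,\text W_{\hat\lambda}$ into $R(\boldsymbol\xi^{(m,n)}_{\hat\lambda})\,\rho(\boldsymbol\xi^{(m,n)}_{\hat\lambda})\bigl(H^{(m,n)}(\boldsymbol\xi^{(m,n)}_{\hat\lambda})\bigr)^{-1}$, which is the summand on the right of \eqref{gauss-chebyshev-cubature:a}.

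The argument is thus essentially a change of variables, and I expect no genuine obstacle: the substantive inputs — the Gaussian cubature of Proposition~\ref{gauss-cubature:prp}, the trigonometric description of the Bernstein-Szeg\"o roots, and the explicit one-dimensional Christoffel weights \eqref{christoffel-weights} (verified in Appendix~\ref{appA}) — are all in hand. The only points demanding care are (i) confirming that $\tilde{\text w}$ is an admissible weight on $(-1,1)$ so that the framework of Sections~\ref{sec2}--\ref{sec4} legitimately applies, and (ii) the bookkeeping of the $2\pi$-normalizations and of the cancellation of $\sin(\xi_j)$ while matching up the discrete weights; and it is precisely the inequality $d\le 2(m+n)+\epsilon_++\epsilon_-$ that makes the explicit root and Christoffel-weight formulas available at the index $m+n$ governing the nodes $x^{(m+n)}_{\hat l}$.
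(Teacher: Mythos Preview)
Your proposal is correct and follows exactly the approach the paper takes: the theorem is stated as ``the specialization of Proposition~\ref{gauss-cubature:prp} to the case of the Bernstein-Szeg\"o polynomials,'' and your write-up simply makes that specialization explicit by carrying out the change of variables $x_j=\cos(\xi_j)$ and inserting the Christoffel-weight formula~\eqref{christoffel-weights}. The bookkeeping you outline (the cancellation of $\sin(\xi_j)$, the $2\pi$ normalizations, and the identification of $d\le 2(m+n)+\epsilon_++\epsilon_-$ with $m+n\ge d_\epsilon$) is accurate and is precisely what the paper leaves implicit.
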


When $d=0$, Theorem \ref{gauss-chebyshev-cubature:thm} reduces to a
Gauss-Chebyshev cubature of the form (cf. Remark \ref{chebyshev:rem})
\begin{subequations}
\begin{align}\label{GCc:a}
\frac{1}{(2\pi )^n\, n!}&\int_0^\pi\cdots \int_0^\pi  f \bigl( \cos(\boldsymbol{\xi})\bigr)  \rho  (\boldsymbol{\xi}) \text{d}\xi_1\cdots \text{d}\xi_n =\\
& \frac{1}{\left( 2(m+n)+\epsilon_+ +\epsilon_-\right)^n}
\sum_{\hat{\lambda}\in\Lambda^{(m,n)}}   f \left( \cos \bigl( \boldsymbol{ \xi} _{\hat{\lambda}}^{(m,n)} \bigr) \right) \rho \bigl(\boldsymbol{ \xi} _{\hat{\lambda}}^{(m,n)} \bigr) ,\nonumber
\end{align}
where $ f \bigl( \cos(\boldsymbol{\xi})\bigr) := f\bigl(\cos(\xi_1),\ldots,\cos(\xi_n)\bigr)$ with $f(x_1,\ldots,x_n)=f(\mathbf{x})\in \mathbb{P}^{(2m+1,n)}$, and
with explicit nodes $ \boldsymbol{ \xi} _{\hat{\lambda}}^{(m,n)}$ \eqref{X-nodes} governed by the Chebyshev roots:
\begin{equation}\label{GCc:b}
\xi^{(m+n)}_{\hat{l}}=
\frac
{\pi \bigl(\hat{l}+\frac{1}{2}+\frac{\epsilon_-}{2}  \bigr)}
{m+n+\frac{\epsilon_+}2+\frac{\epsilon_-}2} \qquad (0\le \hat{l}<m+n).
\end{equation}
\end{subequations}
The latter  cubature formula turns out to be closely related to a class of integration rules of Lie-theoretic nature studied
 Refs. \cite{moo-pat:cubature,moo-mot-pat:gaussian,hri-mot:discrete,hri-mot-pat:cubature}, upon restricting to the classical simple Lie groups of type $B_n$, $C_n$ and $D_n$ (cf. Remarks \ref{haar:rem} and \ref{moody-patera:rem} below for some further details).

 \begin{remark}\label{f-alcove:rem}
 By exploiting the symmetry of the integrand in the coordinates, the LHS of Eq. \eqref{gauss-chebyshev-cubature:a} can be
 readily rewritten as a multivariate integral over the fundamental domain
\begin{equation}\label{open-alcove}
\mathbb{A}^{(n)}:=\{  \boldsymbol{\xi}\in\mathbb{R}^n\mid \pi > \xi_1>\xi_2>\cdots >\xi_n>0\}  .
\end{equation}
The estimates in Proposition \ref{bs-bounds:prp} then ensure that the cubature nodes $\xi^{(m,n)}_{\hat{\lambda}}$ , $\hat{\lambda}\in\Lambda^{(m,n)}$ lie inside
this ordered domain of integration. 
 \end{remark}

\begin{remark}\label{haar:rem}
The unitary circular Jacobi distributions $\rho (\cdot )$ \eqref{rho-jacobi} correspond  to the
Haar measures on the compact simple Lie groups $SO(2n+1;\mathbb{R})$
(type $B_n$: $\epsilon_+ \neq \epsilon_-$), $Sp (n;\mathbb{H})$ (type $C_n$: $\epsilon_\pm =1$), and $SO(2n;\mathbb{R})$ (type $D_n$: $\epsilon_\pm =0$), cf. e.g.
 \cite[Chapter IX.9]{sim:representations}, \cite[Chapter 11.10]{pro:lie} or \cite[Chapter 2.6]{for:log-gases}.
 \end{remark}

 \begin{remark}\label{moody-patera:rem}
 The Gauss-Chebyshev  cubature formula in Eqs. \eqref{GCc:a}, \eqref{GCc:b} should be viewed as a counterpart pertaining to the \emph{nonreduced} root system $R=BC_n$ of the cubature rules in
 \cite[Theorem 7.2]{moo-pat:cubature} (where the situation of  \emph{reduced} crystallographic  root systems was considered). The choice for the kind of underlying Chebyshev polynomials originates in this perspective from a freedom in the weight function $\rho (\boldsymbol{\xi})$ \eqref{rho-jacobi}, which is
given (up to normalization) by the squared modulus of the Weyl denominator of one of the following three reduced subsystems of $R=BC_n$:  type $B_n$ ($\epsilon_+ \neq \epsilon_-$),  type $C_n$ ($\epsilon_\pm =1$), or  type $D_n$ ($\epsilon_\pm =0$), respectively (cf. Remarks \ref{chebyshev:rem} and \ref{haar:rem} above). For $\epsilon_-=0$, the cubature in Eqs. \eqref{GCc:a}, \eqref{GCc:b} can actually already be retrieved from \cite[Theorem 5.2]{hri-mot:discrete} (second part). Notice in this connection that both in \cite{moo-pat:cubature} and in \cite{hri-mot:discrete} the corresponding cubature rules are formulated in symmetrized coordinates involving  transformations analogous to the one in Remark \ref{coordinate-trafo:rem}.
To further facilitate the comparison of Eqs. \eqref{GCc:a}, \eqref{GCc:b}  with the formulas in  \cite[Theorem 7.2]{moo-pat:cubature}, let us briefly
recall that the Weyl group of the root system $BC_n$ is given by the hyperoctahedral group of signed permutations (acting on the coordinates $\xi_1,\ldots ,\xi_n$ through permutations and sign flips).
The closure of the ordered integration domain $\mathbb{A}^{(n)}$ \eqref{open-alcove}, which constitutes a fundamental domain
for this Weyl group action  on $\mathbb{T}^{(n)}:= \mathbb{R}^n/ (2\pi \mathbb{Z})^n$, coincides  (up to rescaling) with the positive Weyl alcove of the root system.  Similarly, the 
set
$\Lambda^{(m,n)}$ \eqref{f-alcove}, which labels both the Schur (character) basis of $\mathbb{P}^{(m,n)}$ and the cubature nodes $\xi^{(m,n)}_{\hat{\lambda}}$, arises as a fundamental domain
for the Weyl group action on $ \mathbb{Z}^n/ (2m \mathbb{Z})^n$.  This fundamental domain is built of the $BC_n$ root system's dominant weights
of the form
\begin{equation}
\lambda =  l_1\varpi_1+\cdots +l_n\varpi_n   \quad \text{with}\quad l_1,\ldots ,l_n\geq 0\ \text{and}\  l_1+\cdots +l_n\leq m,
\end{equation}
where $\varpi_k:=e_1+\cdots +e_k$ ($k=1,\ldots,n$) refers to the basis of the fundamental weights (and $e_1,\ldots ,e_n$ denotes the standard unit basis of (the weight lattice) $\mathbb{Z}^n$).
The fact that in the present situation both the (Schur) character basis and the cubature nodes are labeled by the same dominant weights reflects the self-duality of the root system $BC_n$, whereas in general one has to resort to both weights (for labeling the character basis) and coweights (for labeling the nodes) \cite{moo-pat:cubature}.
\end{remark}

\begin{remark}\label{GS-bs-orthogonality:rem}
For $\lambda\in\Lambda^{(n)}$, let $P_\lambda \bigl(\cos (\boldsymbol{\xi})\bigr):=P_\lambda \bigl(\cos (\xi_1),\ldots,\cos(\xi_n)\bigr)$ denote the generalized Schur polynomial from Eqs. \eqref{schur}, \eqref{vandermonde} associated with the orthonormal Bernstein-Szeg\"o family $p_l\bigl(\cos (\xi)\bigr)$ from Section \ref{sec6}.
The orthogonality relations from Proposition \ref{GS-orthogonality:prp} now become: 
\begin{subequations}
\begin{align}
\frac{1}{(2\pi )^n\, n!} \int_0^\pi\cdots\int_0^\pi P_\lambda \bigl(\cos (\boldsymbol{\xi})\bigr) P_\mu  \bigl(\cos (\boldsymbol{\xi})\bigr)  | C(\boldsymbol{\xi}) |^{-2}
\text{d}\xi_1\cdots\text{d}\xi_n & \\
= \begin{cases}
1&\text{if}\ \mu =\lambda, \\
0&\text{if}\  \mu\neq\lambda 
\end{cases} & \nonumber
\end{align}
($\lambda,\mu\in \Lambda^{(n)}$), where
\begin{equation}\label{C-f}
C(\boldsymbol{\xi}):= 2^{n(n-1)/2} \prod_{1\leq j\leq n} c(\xi_j) \prod_{1\leq j<k\leq n}    (1-e^{-i(\xi_j+\xi_k)})^{-1} (1-e^{-i(\xi_j-\xi_k)})^{-1}
\end{equation}
\end{subequations}
(so $| C(\boldsymbol{\xi} )|^{-2}= \prod_{1\leq j\leq n} |c(\xi_j)|^{-2} \prod_{1\leq j<k\leq n}\bigl( \cos(\xi_j)-\cos(\xi_k)\bigr)^2$) with $c(\xi)$ taken from Eq. \eqref{c-f}.
Upon expanding the pertinent determinant from $P_\lambda$ \eqref{schur}, \eqref{vandermonde}, one arrives at a
multivariate generalization of the explicit formula in Eqs. \eqref{bs-explicit}--\eqref{bs-weights} that is valid for
$\lambda\in\Lambda^{(n)}$ with $\lambda_n\geq d_\epsilon$:
\begin{subequations}
\begin{align}
P_\lambda & \bigl(\cos (\boldsymbol{\xi})\bigr)=  \\
& \Delta_\lambda^{1/2} \sum_{\substack{ \varepsilon_1,\ldots,\varepsilon_n\in \{ -1 ,1\} \\ \sigma \in S_n}}
C(\varepsilon_1\xi_{\sigma_1},\ldots ,\varepsilon_n\xi_{\sigma_n})  \exp \bigl( i\varepsilon_1\lambda_1 \xi_{\sigma_1}+\cdots+ i\varepsilon_n\lambda_n \xi_{\sigma_n}\bigr) ,
\nonumber
\end{align}
where
\begin{equation}
\Delta_\lambda := \prod_{1\leq j \leq n}   \Delta_{\lambda_j+n-j}=\Delta_{\lambda_n}
\end{equation}
\end{subequations}
and $C(\xi_1,\ldots,\xi_n)=C(\boldsymbol{\xi})$.
This explicit formula reveals that the polynomials in question are  special instances of the multivariate Bernstein-Szeg\"o polynomials associated with root systems appearing in
\cite{die:asymptotics} (for nonreduced root systems) and in \cite{die-maz-ryo:berstein-szego} (for reduced root systems). 
\end{remark}

\begin{remark}
In the situation of the previous remark,
the orthogonality relations of Proposition \ref{GS-discrete-orthogonality:prp} and Remark \ref{GS-dual-discrete-orthogonality:rem} give rise to
the following multivariate generalization
of the discrete orthogonality relations for the Bernstein-Szeg\"o polynomials in Remark \ref{bs-orthogonality:rem} when $m+n\geq d_\epsilon$:
\begin{subequations}
\begin{align}\label{GS-bs:orthogonality}
  \sum_{\hat{\lambda}\in\Lambda^{(m,n)}}    P_\lambda \bigl( \cos(\boldsymbol{\xi}_{\hat{\lambda}}^{(m,n)})\bigr) P_{\mu} \bigl( \cos(\boldsymbol{\xi}_{\hat{\lambda}}^{(m,n)})\bigr)  \Bigl(  |C (\boldsymbol{\xi}_{\hat{\lambda}}^{(m,n)}) |^2  H^{(m,n)} (\boldsymbol{\xi}_{\hat{\lambda}}^{(m,n)}) \Bigr)^{-1}&\\ 
= \begin{cases}
1&\text{if}\ \mu =\lambda, \\
0&\text{if}\  \mu\neq\lambda 
\end{cases} & \nonumber
\end{align}
($\lambda,\mu\in \Lambda^{(m,n)}$) and
\begin{align}\label{GS-bsd:orthogonality}
  \sum_{\lambda\in\Lambda^{(m,n)}}    P_\lambda \bigl( \cos(\boldsymbol{\xi}_{\hat{\lambda}}^{(m,n)})\bigr) & P_\lambda \bigl( \cos (\boldsymbol{\xi}_{\hat{\mu}}^{(m,n)}) \bigr)
\\
&=\begin{cases}
  |C (\boldsymbol{\xi}_{\hat{\lambda}}^{(m,n)}) |^2  H^{(m,n)} (\boldsymbol{\xi}_{\hat{\lambda}}^{(m,n)})  &\text{if}\ \hat{\mu} =\hat{\lambda}, \\
0&\text{if}\ \hat{\mu} \neq \hat{\lambda}
\end{cases} \nonumber
\end{align}
\end{subequations}
($\hat{\lambda},\hat{\mu} \in \Lambda^{(m,n)}$),  respectively.  (Here
the parameter restrictions and the notations are in accordance with Theorem \ref{gauss-chebyshev-cubature:thm} and  Remark \ref{GS-bs-orthogonality:rem}.)
\end{remark}

\section*{Acknowledgments}
We are grateful to the referees for their helpful constructive remarks and, in particular, for pointing out the relevance of the seminal work of Berens, Schmid and Xu \cite{ber-sch-xu:multivariate}.

\appendix

\section{Explicit Christoffel weights for the Gauss quadrature associated with Bernstein-Szeg\"o polynomials}\label{appA}
In this appendix we provide a short verification of Eq. \eqref{christoffel-weights} based on the Christoffel-Darboux kernel
\begin{subequations}
\begin{equation}
\sum_{0\leq l\leq m} p_l(x)p_l(y)=   \frac{\alpha_m}{\alpha_{m+1}}   \frac{p_{m+1}(x)p_m(y)-p_m(x)p_{m+1}(y)}{x-y}
\end{equation}
on the diagonal $y\to x$
\begin{equation}\label{diagonal}
\sum_{0\leq l\leq m} \bigl(p_l(x)\bigr)^2 =   \frac{\alpha_m}{\alpha_{m+1}}  \Bigl(   p_{m+1}^\prime (x)p_m(x) -p_{m+1}(x)p_m^\prime(x)\Bigl).
\end{equation}
\end{subequations}
At the root $x=\cos(\xi^{(m+1)}_{\hat{l}})$ of $p_{m+1}(x)$ Eq. \eqref{diagonal}  produces (cf. e.g.  \cite[Equation (3.4.7)]{sze:orthogonal}):
\begin{equation}\label{cw}
{{w}}^{(m+1)}_{\hat{l}}=  \frac{-\alpha_{m+2}/\alpha_{m+1}}{p_{m+2}\bigl(  \cos (\xi^{(m+1)}_{\hat{l}})\bigr) p_{m+1}^\prime \bigl(  \cos (\xi^{(m+1)}_{\hat{l}})\bigr) }\qquad (0\leq \hat{l}\leq m) ,
\end{equation}
where we have used that $\alpha_{m}\alpha_{m+2}p_m\bigl(\cos(\xi^{(m+1)}_{\hat{l}})\bigr)=-\alpha_{m+1}^2 p_{m+2}\bigl(\cos(\xi^{(m+1)}_{\hat{l}})\bigr)$ (by the three-term recurrence relation).
Combined with the explicit expressions  for the Bernstein-Szeg\"o polynomials in Eqs. \eqref{bs-explicit}--\eqref{bs-weights} and Eq. \eqref{lead-coef} for $l\geq m+1\geq d_\epsilon$, the formula in Eq. \eqref{cw} readily produces Eq. \eqref{christoffel-weights}  upon invoking that  $p_{m+1}\bigl(\cos (\xi^{(m+1)}_{\hat{l}})\bigr)=0$, i.e.
\begin{equation}\label{bae}
e^{2i(m+1)\xi}=-\frac{c(-\xi)}{c(\xi)}\quad  \text{at}   \quad \xi=\xi^{(m+1)}_{\hat{l}}
\end{equation}
(cf. the proof of Proposition \ref{bs-bounds:prp}). Indeed, we read-off from Eqs. \eqref{bs-explicit}--\eqref{bs-weights} that
\begin{align}\label{pm+2}
p_{m+2}\bigl(  \cos (\xi )\bigr) = &c(-\xi)e^{-i(m+1)\xi} \left( \frac{c(\xi)}{c(-\xi)}e^{i(2m+3)\xi}+e^{-i\xi}       \right) \\
\stackrel{\text{Eq.}~\eqref{bae}}{=}&  -2i c(-\xi) e^{-i(m+1)\xi} \sin (\xi) \nonumber 
\end{align}
and that
\begin{align}\label{p'm+1}
&p_{m+1}^\prime \bigl(  \cos (\xi )\bigr)=-\bigl( \sin (\xi)\bigr)^{-1} \Delta^{1/2}_{m+1} c(\xi) e^{i(m+1)\xi} \times \\
& \left(  \frac{c^\prime (\xi)}{c(\xi)}  - \frac{c^\prime (-\xi)}{c(\xi)}e^{-2i(m+1)\xi}+i(m+1)-i(m+1)\frac{c(-\xi)}{c(\xi)}e^{-2i(m+1)\xi} \right) \nonumber \\
&\stackrel{\text{Eq.}~\eqref{bae}}{=}  
-i\bigl( \sin (\xi)\bigr)^{-1} \Delta^{1/2}_{m+1} c(\xi) e^{i(m+1)\xi} \left( 2(m+1) +\frac{1}{i}\left( \frac{c^\prime (\xi)}{c(\xi)}  + \frac{c^\prime (-\xi)}{c(-\xi)} \right) \right) . \nonumber
\end{align}
Substition of Eqs. \eqref{pm+2}, \eqref{p'm+1} and Eq. \eqref{lead-coef} into Eq. \eqref{cw} entails that
\begin{equation}\nonumber
{{w}}^{(m+1)}_{\hat{l}} =  \frac{1}{c ( \xi^{(m+1)}_{\hat{l}})  c(- \xi^{(m+1)}_{\hat{l}}) } \left(   2(m+1)    +\frac{1}{i}\left( \frac{c^\prime (\xi^{(m+1)}_{\hat{l}})}{c(\xi^{(m+1)}_{\hat{l}})}  + \frac{c^\prime (-\xi^{(m+1)}_{\hat{l}})}{c(-\xi^{(m+1)}_{\hat{l}})} \right)    \right)^{-1} .
\end{equation}
Eq. \eqref{christoffel-weights} now follows upon making (the imaginary part of) the logarithmic derivative of $c(\xi)$ \eqref{c-f} explicit:
\begin{equation}
\frac{1}{i}\left( \frac{c^\prime (\xi)}{c(\xi)}  + \frac{c^\prime (-\xi)}{c(-\xi)} \right)=
 \epsilon_++ \epsilon_-+ \sum_{1\leq r\leq d}   \left( \frac{1-a_r^2}{1+a_r^2+2a_r\cos (\xi)}-1\right)  . \nonumber
\end{equation}

\bibliographystyle{amsplain}

\end{document}